\long\def\skipit#1{} 
\def\={\,=\,}
\def\+{\,+\,}
\def\-{\,-\,}

\def\dif{{\mathord{{\rm d}}}}
\def\mR{{\mathbb R}}
\def\mP{{\mathbb P}}

\newcommand{\eu}{{\mathfrak e}}  
\newcommand{\Eu}{{\mathscr{E}}}  






\documentclass[12pt]{amsart}
\topmargin 0 pt \textheight 46\baselineskip \advance\textheight by \topskip
\setlength{\parskip}{3pt plus 1pt minus 1pt}  
\setlength{\textwidth}{148mm}
\setlength{\oddsidemargin}{5.6mm} \setlength{\evensidemargin}{5.6mm}
\textheight=8in  
\footskip=30pt

\usepackage{pstricks-add}
\usepackage{amsfonts}
\usepackage{amsmath}
\usepackage{amssymb}
\usepackage{ifthen,calc}
\usepackage{color}
\usepackage{epsfig}
\usepackage{graphicx}
\usepackage{epstopdf}
\usepackage{epsfig}
\usepackage{latexsym}
\usepackage{multicol}
\usepackage[sc]{mathpazo}
\linespread{1.05}
\usepackage[format=hang, margin=10pt]{caption}
\usepackage[mathscr]{eucal}


\newcounter{hours}
\newcounter{minutes}
\newcommand{\printtime}{
	\setcounter{hours}{\time/60}%
	\setcounter{minutes}{\time-\value{hours}*60}
	\ifthenelse{\value{hours}<10}{0}{}\thehours:%
	\ifthenelse{\value{minutes}<10}{0}{}\theminutes}

\parskip3pt  

\pagestyle{myheadings}
\markright{\centerline{Satins and Twills}}

\numberwithin{equation}{section}
\numberwithin{figure}{section}
\numberwithin{table}{section}

\newtheorem{thm}{Theorem}[section]
\newtheorem{cor}[thm]{Corollary}
\newtheorem{lemma}[thm]{Lemma}
\newtheorem{prop}[thm]{Proposition}

\newtheorem{J-com}{JG-comment}[section]
\theoremstyle{definition}
\newtheorem{example}{Example}[section]

\newtheorem{defn}{Definition}[section]
\newtheorem{problem}[thm]{Problem}

\newtheorem{rem}[thm]{Remark}




\newcommand{\cG}{{\mathcal G}}
\newcommand{\Tau}{{\mathcal T}}

\def\dsum{\displaystyle\sum}

\keywords{planar ribbon graph,  partial-dual, partial-dual genus polynomials,  asymptotic normality}

\begin{document}
\title{ Partial-duals for planar ribbon graphs}

\author{Qiyao Chen}
\address{College of Mathematics, Hunan University, 410082 Changsha, China}
\email{chen1812020@163.com}
\author{Yichao Chen}
\address{College of Mathematics, Hunan University, 410082 Changsha, China}
\email{ycchen@hnu.edu.cn}

\maketitle
\begin{abstract}
\textwidth=114.3mm
{In 2009, Chmutov introduced the partial-duality for a ribbon graph $G$. Recently, Gross, Mansour and Tucker enumerated all possible partial-duals of $G$ by genus and introduced the partial-dual genus polynomial of a ribbon graph $G.$ This paper mainly enumerates partial-duals for planar ribbon graphs. First, we obtain a formula for the maximum partial-dual genus for any planar ribbon graph and give a negative answer to the interpolating conjecture of Gross, Mansour and Tucker. Then we show that  there is a recurrence relation between the partial-dual genus polynomials of planar ribbon graphs $G-e$ and $G$. Furthermore, two related results are also given.  These recurrence relations give new approaches to calculate the partial-genus dual polynomials for some planar ribbon graphs. In addition, we  prove the asymptotic normality  for some partial-dual genus distributions.}
\end{abstract}

\section{Introduction}

Ribbon graphs are often used to represent cellularly  embedded graphs. Following \cite{Chm09}, a \textit{ribbon graph} can be seen as the neighborhood of a graph embedded in the surface. It consists of two sets of  closed disks, called  \textit{vertex-disks} and \textit{edge-ribbons} (i.e. ribbons). We use $G$ to denote either  a ribbon graph or an embedded graph. Let $v(G)$, $e(G)$, $f(G)$ and $c(G)$  denote the numbers of \textit{vertices, edges, faces} and \textit{components} of $G,$ respectively. Let $\gamma(G)$ denote the \textit{orientable genus} of an oriented ribbon graph $G.$ By the Euler formula, $\gamma(G)=c(G)-\frac{1}{2}(v(G)-e(G)+f(G)).$

  We emphasize that the notation $A$ is used both for a subset of edges of a ribbon graph and as the spanning subribbon graph the edge-set of which is  $A$. The \textit{partial-dual} $G^{A},$ introduced by Chmutov in \cite{Chm09}, can be seen as geometric
duality over a partial edge set $A$ of the ribbon graph $G$.
  \begin{defn}
\cite{Chm09} Let $G$ be an  embedded graph and $A \subseteq E(G)$. Arbitrarily
orient and label each of the edges of $G$. (The orientation need not extend to an
orientation of the ribbon graph). The boundary components of the spanning ribbon
subgraph ($V(G)$, $A$) of $G$ meet the edges of $G$ in disjoint arcs (where the spanning
ribbon subgraph is naturally embedded in $G$). On each of these arcs, place an arrow
which points in the direction of the orientation of the edge boundary and is labelled by
the edge it meets. The resulting marked boundary components of the spanning ribbon
subgraph ($V(G)$, $A$) define an arrow presentation. The ribbon graph corresponding to
this arrow presentation is the partial dual $G^{A}$ of $G$. This process is explained
locally at a pair of arrows in Figure \ref{def}.

\begin{figure}[h]
  \begin{minipage}[t]{0.6\textwidth}
  \centering
  \includegraphics[width=0.8\textwidth]{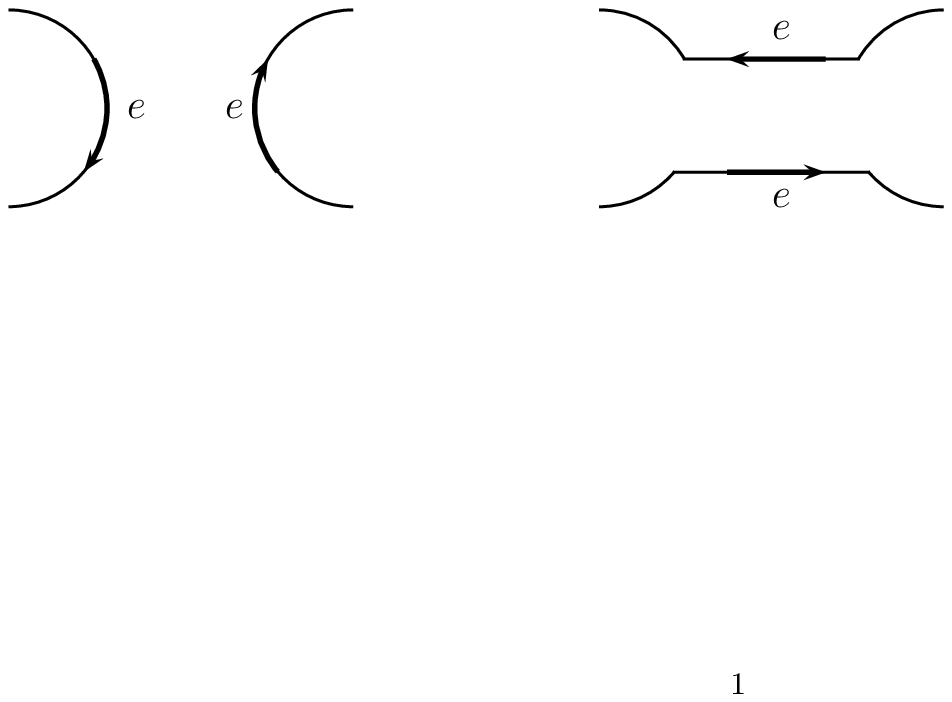}

  \end{minipage}
  \caption{ $e\in G$ (left) and $e\in G^{\{e\}}$ (right)}
  \label{def}
  \end{figure}

Partial- duality provides a way to study the relationship between knot theory, knot invariants and  ribbon graph polynomials \cite{BR02, CP07,  CV08, DFKL08, Mof18, Mof10}. We refer the readers to \cite{EM13} for more background about the partial-duality.

    \end{defn}


 \begin{prop}\cite{Chm09}
  Suppose $G$ is a ribbon graph, and let $A\subseteq E(G)$. Then the following assertions hold:
  \begin{enumerate}
\item   $v(G^{A})=f(A)$ and $e(G^{A})=e(G)$.
\item  $G^{A}$ is connected if and only if G is connected.
\item  $G^{A}$ is orientable if and only if G is orientable.
\item  $G^{\emptyset}=G$ and  $G^{E(G)}=G^{*}$.

  \end{enumerate}

  \end{prop}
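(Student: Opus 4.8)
The plan is to work entirely from the arrow-presentation description of $G^{A}$ supplied in the definition, since all four assertions can be read off from how $G^{A}$ is assembled out of the spanning ribbon subgraph $(V(G),A)$ and the arrows that the edges of $G$ leave on its boundary. Parts (1) and (4) are essentially a matter of unwinding the construction, whereas (2) and (3) carry the real content. For those two I would, where convenient, reduce to the one-edge case by processing the edges of $A$ one at a time (after first noting the edge-by-edge description of partial duality), so that it suffices to understand how a single application of the local move of Figure~\ref{def} affects connectivity and orientability.

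First I would establish (1). By construction the vertex-disks of $G^{A}$ are in bijection with the marked boundary components of $(V(G),A)$, and the boundary components of a spanning ribbon subgraph are precisely its faces; hence $v(G^{A})=f(A)$. For the edges, each edge $e$ of $G$ meets the boundary of $(V(G),A)$ in exactly two arcs and so contributes one labelled pair of arrows, and each such pair is glued back into a single edge-ribbon of $G^{A}$; this gives a bijection $E(G)\to E(G^{A})$, so $e(G^{A})=e(G)$. Part (4) then follows by specialisation. For $A=\emptyset$ the subgraph $(V(G),\emptyset)$ is the disjoint union of the vertex-disks, whose boundary components carry exactly the arrow presentation of $G$ itself, so $G^{\emptyset}=G$; for $A=E(G)$ the subgraph $(V(G),E(G))$ is all of $G$, its boundary components are the faces of $G$, and the resulting arrow presentation is by definition the geometric dual, so $G^{E(G)}=G^{*}$.

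For (2) I would encode connectivity combinatorially through an incidence graph $H$ attached to any arrow presentation: its nodes are the circles and its edges are the labels, each label joining the two circles that bear its arrows, so that the ribbon graph is connected iff $H$ is connected. Reading $G$ through its own arrow presentation makes $H(G)$ the underlying abstract graph of $G$, while the nodes of $H(G^{A})$ are the boundary components of $(V(G),A)$. The easy direction is $c(G^{A})\ge c(G)$: the subsurface $(V(G),A)$ splits as a disjoint union over the components of $G$, and an edge never joins circles coming from two different components, so the components of $G$ cannot merge. The substantive direction is $c(G^{A})\le c(G)$, i.e.\ that for connected $G$ any two boundary circles of $(V(G),A)$ are linked in $H(G^{A})$ through the full edge set, using both edges inside $A$ (which join boundary circles of the same piece of the subsurface) and edges outside $A$ (which may join circles across different pieces). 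I expect this bookkeeping — tracking how the boundary circles of $(V(G),A)$ are threaded together by all of the edges — to be the main obstacle, and the one-edge reduction is the cleanest way past it, since a single-edge partial dual visibly neither splits nor merges components.

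Finally, for (3) I would transport orientations directly and, if needed, lean again on the one-edge reduction. A ribbon graph is orientable iff its disks admit a global orientation under which every band is untwisted, equivalently iff the circles of an arrow presentation can be oriented so that the two arrows of each label are coherent. Given an orientation of $G$, the subsurface $(V(G),A)$ inherits one, hence so do its boundary circles, and one checks this induced data makes every band of $G^{A}$ untwisted, giving one implication; the reverse follows by the symmetry of the construction, or more transparently from the one-edge move, which re-glues a single band without inserting a half-twist and therefore carries a compatible orientation back and forth between $G$ and $G^{\{e\}}$. The delicate point is verifying coherence is genuinely preserved by the local move in every configuration (loop versus non-loop, and the two arrow orientations), which is a finite case-check rather than a conceptual difficulty.
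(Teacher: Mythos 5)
The paper does not prove this proposition: it is quoted verbatim from Chmutov \cite{Chm09} and used as a black box, so there is no in-paper argument to compare yours against. Judged on its own terms, your sketch is sound and follows the standard line of Chmutov's original proof: part (1) from the bijection between vertex-disks of $G^{A}$ and boundary components (equivalently faces) of $(V(G),A)$ together with the arrow-pair-to-ribbon correspondence for edges; part (4) by specialising $A=\emptyset$ and $A=E(G)$; and parts (2) and (3) via the reduction $G^{A}=(G^{A\smallsetminus\{e\}})^{\{e\}}$ followed by a local check of the single-edge move of Figure \ref{def}. The two points you flag as delicate are exactly the ones that need the case analysis: for connectivity, the non-loop case merges the two endpoints of $e$ into one vertex while the loop case splits a vertex into two vertices still joined by $e$, so $c(G^{\{e\}})=c(G)$ in every case; for orientability, one direction plus the involutivity $(G^{\{e\}})^{\{e\}}=G$ gives the equivalence. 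Note that the edge-by-edge decomposition of partial duality, which you invoke in passing, is itself a lemma requiring proof (it is Chmutov's, not a triviality), so if this were written out in full that step should be established first.
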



Recently, Gross, Mansour and Tucker \cite{GMT18} introduced \textit{the partial-dual (orientable) genus polynomial} for any ribbon graph $G$ (\textit{pdG-polynomial}, for short), i.e., the pdG-polynomial is given by the calculation of the associated generating function of the partial-dual genus of all subgraphs of  $G.$ I.e., the pdG-polynomial of $G$ is the generating function $$~^{\partial}{\Gamma}_{G}(z)=\sum\limits_{A\subseteq E(G)}z^{\gamma[G^{A}]}.$$ They  also provided a way for expressing the \textit{orientable genus $\gamma(G^{A})$} of $G^A$. We note that $c(A)$ counts isolated vertices
of $A$ when $A$ is regarded as a spanning subgraph.

\begin{thm}\cite{GMT18}\label{Gro:main}
Let $G$ be an orientable ribbon graph and $A\subseteq E(G)$, then
\begin{eqnarray}\label{G:main}
\gamma(G^{A})&=&\gamma(A)+\gamma(A^{c})+c(G)+v(G)-c(A)-c(A^{c}).
\end{eqnarray}

\end{thm}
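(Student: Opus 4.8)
The plan is to apply the Euler-formula expression for the orientable genus, $\gamma(H)=c(H)-\tfrac12\bigl(v(H)-e(H)+f(H)\bigr)$, to the partial dual $G^{A}$ itself, and then to rewrite the resulting face-count in terms of the boundary components of the two spanning subgraphs $A$ and $A^{c}$. Since $G$ is orientable, part (3) of the Proposition guarantees that $G^{A}$ is orientable, and $A$, $A^{c}$ are orientable as sub-ribbon-graphs of $G$; hence every genus appearing below is a genuine orientable genus and the Euler formula is available throughout. The four quantities I need to control are $c(G^{A})$, $v(G^{A})$, $e(G^{A})$ and $f(G^{A})$. Three of these are immediate from the Proposition: $c(G^{A})=c(G)$ by part (2), while $v(G^{A})=f(A)$ and $e(G^{A})=e(G)$ by part (1).

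The one genuinely nontrivial input, and the step I expect to be the main obstacle, is identifying $f(G^{A})$. I would obtain it from $f(G^{A})=v\bigl((G^{A})^{*}\bigr)$ (the faces of a ribbon graph are the vertices of its geometric dual) together with the composition law for partial duals, $(G^{A})^{B}=G^{A\triangle B}$. Taking $B=E(G)$ and using $(G^{A})^{*}=(G^{A})^{E(G)}$ from part (4) gives $(G^{A})^{*}=G^{A\triangle E(G)}=G^{A^{c}}$, whence $f(G^{A})=v(G^{A^{c}})=f(A^{c})$ by a second application of part (1). The care here lies in checking that the edge labels of $G^{A}$ match those of $G$, so that $E(G^{A})=E(G)$ and the composition law applies with the intended edge set.

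With these four identities in hand the remainder is bookkeeping. Applying Euler to $G^{A}$ gives $\gamma(G^{A})=c(G)-\tfrac12\bigl(f(A)-e(G)+f(A^{c})\bigr)$. Next I would solve the Euler formula for $A$ and for $A^{c}$ to express their face counts as $f(A)=2c(A)-2\gamma(A)-v(G)+|A|$ and $f(A^{c})=2c(A^{c})-2\gamma(A^{c})-v(G)+(e(G)-|A|)$, using $v(A)=v(A^{c})=v(G)$ (both are spanning) and $e(A)+e(A^{c})=e(G)$. Substituting these into the displayed expression, the terms $|A|$ and $e(G)$ cancel, and collecting the remainder yields $\gamma(G^{A})=\gamma(A)+\gamma(A^{c})+c(G)+v(G)-c(A)-c(A^{c})$, as claimed.
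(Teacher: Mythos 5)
Your argument is correct, but there is nothing in the paper to compare it against: the paper states this theorem as a quoted result of Gross--Mansour--Tucker (itself a variant of Moffatt's characterization of partial duals) and gives no proof, so your derivation is genuinely supplying something the paper omits. The route you take is the natural one. The three easy identifications $c(G^{A})=c(G)$, $v(G^{A})=f(A)$, $e(G^{A})=e(G)$ come straight from the quoted Proposition, and you correctly isolate the only real issue, namely $f(G^{A})=f(A^{c})$. Your derivation of that via $f(G^{A})=v\bigl((G^{A})^{*}\bigr)$, part (4) of the Proposition, and the composition law $(G^{A})^{B}=G^{A\triangle B}$ is sound; just be aware that the composition law is the one ingredient not contained in the Proposition as quoted here, so it must be cited separately from Chmutov's paper (where it is proved, together with the fact that the edges of $G^{A}$ are canonically labelled by those of $G$, which is what makes $B=E(G)$ meaningful). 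The final bookkeeping checks out: with $v(A)=v(A^{c})=v(G)$ and $e(A)+e(A^{c})=e(G)$ one gets $f(A)+f(A^{c})-e(G)=2c(A)+2c(A^{c})-2\gamma(A)-2\gamma(A^{c})-2v(G)$, and substituting into $\gamma(G^{A})=c(G)-\tfrac12\bigl(f(A)-e(G)+f(A^{c})\bigr)$ yields exactly the claimed formula. An equally common alternative, closer to Moffatt's original argument, is to count the boundary components of $G^{A}$ directly by tracing them against those of the spanning subgraphs $A$ and $A^{c}$, which avoids invoking the composition law but requires more care with the arrow presentation; your version trades that combinatorial tracing for one extra citation, which is a reasonable exchange.
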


As noted by \cite{GMT18},   the formula above is a  variant of a result of \cite{Mof11}.

{By the \emph{partial-dual genus distribution} of a ribbon graph $G$ we mean the sequence}
$\gamma_0(G),\gamma_1(G),$ $\gamma_2(G),\cdots ,$
where $\gamma_i(G)$ is the number of partial-duals of $G$ with genus $i$ for $i\geq0$. I.e., $\sum\limits_{i\geq0}\gamma_i(G)=2^{e(G)}.$ 
The partial-dual orientable genus polynomial of $G$ is also given by  $$~^{\partial}{\Gamma}_{G}(z)=\sum\limits_{i\geq0}\gamma_i(G)z^{i}.$$

 For any ribbon graph $G,$  let  $X_{G}$ be  a random variable with distribution
\begin{eqnarray}
\label{10-1}
p_i=\mP(X_{G}=i)=\frac{\gamma_i(G) }{2^{e(G)}}, \quad i=0,1,\cdots.
\end{eqnarray}

\noindent The \textit{probability partial-dual genus polynomial} of $G$ is defined as $$P_{X_{G}}(z)=\sum\limits_{i\geq0}p_iz^{i}=\frac{~^{\partial}{\Gamma}_{G}(z)}{2^{e(G)}}.$$

Suppose that ${\cG}=\{{G}_n\}_{n=1}^\infty$ is a sequence of ribbon graphs. For $n\geq 1$, let   $e_n$ ($e_n=\mathbb{E}(X_{G_{n}})$) and  $ v_n$ be the \textit{mean} and \textit{variance} of the partial-dual genus distribution of ${G}_n$, respectively. We say  the partial-dual genus distribution of ${G}_n$ is \textit{asymptotically normal distribution}   when $n$ tends to infinity if for any $x\in \mR,$ we have
  \begin{eqnarray*}
    \lim_{n\rightarrow \infty } \mP(\frac{X_{G_n}-e_n}{ \sqrt{v_n}} \leq x)=\int_{-\infty}^x \frac{1}{\sqrt{2\pi}}e^{-\frac{1}{2}u^2}\dif u.
  \end{eqnarray*}


The article is organized as follows. In Section 2, we first obtain a Xuong-like formula \cite{Xuo79} for the maximum partial-dual genus of any planar ribbon graph. Then, using the join operation of planar ribbon graphs, we find some counter-examples to conjecture 5.3 in \cite{GMT18}. In Section 3, we obtain a recurrence relation for  the pdG-polynomials of planar ribbon graphs $G$ and $G-e$. In addition, another two related theorems are also proved. We use the theorems to compute the pdG-polynomials for some planar ribbon graphs. In Section 4,   we give asymptotic results for partial-dual distributions of some planar ribbon graphs.

\section{The maximum partial-dual genus of a planar ribbon graph }

In 1979, Xuong \cite{Xuo79} obtained a formula for the maximum genus of a graph. Here we will give a similar theorem for  the maximum partial-dual genus  of a planar ribbon graph.  In addition, a counterexample of an infinite family of ribbon graphs for the interpolation conjecture \cite{GMT18} is constructed by using planar ribbon graphs and the join operation  of two ribbon graphs.

\subsection{A formula for the maximum partial-dual genus}
 All  subgraphs discussed in this paper are spanning subgraphs. Thus,  the subgraph specified by an edge-set $A\subseteq E(G)$ is regarded as including every vertex of $G$, not just the vertices that are endpoints of edges of  $A$.
\textit{The maximum partial-dual (orientable) genus} $\gamma_{M} (G)$ of a ribbon graph $G$ is the maximum among the genera  of all partial duals of  $G$.

Let $G_{1}\cup G_{2}$ be the \emph{union} of two ribbon graphs $G_{1}$ and $G_{2}$  with vertex set $V(G_{1}\cup G_{2})$ and ribbon set $E(G_{1}\cup G_{2}).$ We denote by $G-e$ the ribbon graph obtained by removing the ribbon $e$ from $G.$ In the following discussion, we will  abbreviate $G\cup \{e\}$ to $G\cup e$.
Let $ \xi(G)=min\{c(T^{c})| T \ \rm{is\ a\ spanning\ tree\ of\ }$G$\}$.

\begin{lemma}\label{25}  Let $G$ be a connected  ribbon graph with  $A\subseteq E(G)$, then $c(A)+c(A^{c})\geq 1+\xi(G)$.
\end{lemma}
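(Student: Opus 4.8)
The plan is to reduce the inequality to the construction of a single well-chosen spanning tree. Since $\xi(G)=\min_{T}c(T^{c})$, it suffices to exhibit one spanning tree $T$ of $G$ for which $c(T^{c})\le c(A)+c(A^{c})-1$; the lemma then follows at once from $\xi(G)\le c(T^{c})$. Throughout I will use the elementary fact that, for any edge set $S\subseteq E(G)$, the spanning subgraph with edge-set $S$ satisfies $c(S)=v(G)-|F|$ for a maximal spanning forest $F$ of $S$, and consequently $c(S)\le v(G)-|F'|$ for every acyclic $F'\subseteq S$, because enlarging $F'$ back to $S$ can only merge components.

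First I would fix a maximal spanning forest $F_{A}\subseteq A$ of the spanning subgraph $A$ and a maximal spanning forest $F_{A^{c}}\subseteq A^{c}$ of the spanning subgraph $A^{c}$, so that $|F_{A}|=v(G)-c(A)$, $|F_{A^{c}}|=v(G)-c(A^{c})$, and $F_{A}\cap F_{A^{c}}=\emptyset$. Because $G$ is connected and $F_{A}$ (resp.\ $F_{A^{c}}$) joins exactly the vertices joined by $A$ (resp.\ $A^{c}$), the spanning subgraph with edge-set $F_{A}\cup F_{A^{c}}$ is connected. Hence the acyclic set $F_{A^{c}}$ extends to a spanning tree $T$ with $F_{A^{c}}\subseteq T\subseteq F_{A}\cup F_{A^{c}}$; moreover every edge added to $F_{A^{c}}$ in this extension must come from $F_{A}$, since by maximality no edge of $A^{c}$ joins two distinct components of $F_{A^{c}}$.

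The key step is then a short count involving the cotree. As $T=F_{A^{c}}\cup(T\cap F_{A})$ is a disjoint union and $|T|=v(G)-1$, we obtain $|T\cap F_{A}|=(v(G)-1)-(v(G)-c(A^{c}))=c(A^{c})-1$, and therefore $F_{A}\setminus T=F_{A}\setminus(T\cap F_{A})$ has exactly $|F_{A}|-(c(A^{c})-1)=v(G)-c(A)-c(A^{c})+1$ edges. This set is acyclic, being a subset of the forest $F_{A}$, and it lies in $T^{c}$, being contained in $F_{A}\subseteq E(G)$ but disjoint from $T$. Applying the component bound from the first paragraph with $F'=F_{A}\setminus T\subseteq T^{c}$ yields $c(T^{c})\le v(G)-(v(G)-c(A)-c(A^{c})+1)=c(A)+c(A^{c})-1$, and hence $\xi(G)\le c(A)+c(A^{c})-1$, which is exactly the assertion.

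The main obstacle I anticipate is controlling the two features of $F_{A}\setminus T$ simultaneously: it must be acyclic, so that it forces $c(T^{c})$ to be small, and it must have the precise cardinality $v(G)-c(A)-c(A^{c})+1$. Both hinge on choosing $T$ so that it contains all of $F_{A^{c}}$ while drawing its remaining edges from $F_{A}$, which is precisely where the connectedness of $G$ and the maximality of the forests $F_{A},F_{A^{c}}$ enter; the arithmetic identifying $|F_{A}\setminus T|$ with the target value $v(G)-c(A)-c(A^{c})+1$ is the place to verify carefully, and the running check against the cycle and tree cases (where the bound is attained with equality) gives confidence that the constants are correct.
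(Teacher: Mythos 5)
Your proof is correct, and it reaches the conclusion by a genuinely different route than the paper. The paper argues iteratively: it deletes cycle edges from $A$ one at a time (each deletion not increasing $c(A)+c(A^{c})$) until a forest $F$ remains, then adds edges of $F^{c}$ joining distinct components of $F$ until a spanning tree $T$ is reached, tracking the quantity $c(\cdot)+c(\cdot^{c})$ through a small case analysis at each step and concluding $c(A)+c(A^{c})\ge c(T)+c(T^{c})\ge 1+\xi(G)$. You instead produce the witnessing spanning tree in one shot: take maximal spanning forests $F_{A}\subseteq A$ and $F_{A^{c}}\subseteq A^{c}$, observe that $F_{A}\cup F_{A^{c}}$ is a connected spanning subgraph, extend $F_{A^{c}}$ to a spanning tree $T\subseteq F_{A}\cup F_{A^{c}}$, and then bound $c(T^{c})$ by exhibiting the acyclic set $F_{A}\setminus T\subseteq T^{c}$ of size exactly $v(G)-c(A)-c(A^{c})+1$. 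All the steps check out: the identity $c(S)=v(G)-|F|$ for a maximal spanning forest, the connectivity of $F_{A}\cup F_{A^{c}}$ (the component partition of a maximal forest of $S$ coincides with that of $S$, and the join of the two partitions is the partition of $A\cup A^{c}=E(G)$), and the arithmetic $|F_{A}\setminus T|=|F_{A}|-(|T|-|F_{A^{c}}|)$. What your approach buys is the elimination of the induction and of the paper's cut-ribbon case distinctions (which, incidentally, are where the paper's write-up has some off-by-one slips), replaced by a clean rank/nullity count; what the paper's approach buys is that it stays entirely at the level of elementary component bookkeeping and makes visible exactly when the inequality can be strict at each modification step.
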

\begin{proof}If $A$ is not a forest, then we can choose a ribbon  $e$ of $A$ such that $e$ belongs to a cycle of $A$.  We have \begin{equation}
c(A)+c(A^{c})=
\begin{cases}
c(A-e)+c((A-e)^{c})+1,&\text{if e is a cut ribbon in $(A-e)^{c}$,}\\
c(A-e)+c((A-e)^{c}),&\text{otherwise.}
\end{cases}
\end{equation}
then we get $c(A)+c(A^{c})\geq c(A-e)+c((A-e)^{c}).$ Repeat the process above until $A$ becomes a forest $F$. We have $c(A)+c(A^{c})\geq c(F)+c(F^{c}).$    If $F$ is connected, then $F$ is a tree with $c(F^{c})\geq \xi(G)$, and the proof is completed. Otherwise we choose a ribbon  $e$ of $F^{c}$ such that $e$ connects different components of $F$.   In this case  $e$ is also a cut ribbon  in $F\cup e$ and $c(F\cup e)=c(F)-1$.
 \begin{equation*}
 c(F^{c})+c(F)=
 \begin{cases}
 c((F\cup e)^{c})+c(F\cup e)+2,&\text{if $e$ is a cut ribbon in $(F\cup e)^{c}$,}\\
 c((F\cup e)^{c})+c(F\cup e)+1,&\text{otherwise.}
 \end{cases}
\end{equation*}
 Repeat this process  until $F$  changes to tree  $T$, and we have $c(F)+c(F^{c})> c(T)+c(T^{c})\geq c(T)+\xi(G)$. The result follows.

\end{proof}

\begin{thm}\label{24}Let $G$ be a connected planar ribbon graph,  then $\gamma_{M} (G)=v(G)-\xi(G).$
\end{thm}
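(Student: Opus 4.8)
The plan is to reduce the statement to a purely combinatorial minimization by feeding planarity into the Gross--Mansour--Tucker formula (Theorem \ref{Gro:main}) and then invoking Lemma \ref{25}. First I would record the two structural consequences of the hypotheses: since $G$ is connected, $c(G)=1$, and since $G$ is planar, $\gamma(G)=0$. The crucial observation is that planarity is inherited by every spanning subgraph: deleting an edge-ribbon cannot increase the orientable genus, and genus is nonnegative, so each spanning subribbon graph of $G$ again has genus $0$. In particular, for every $A\subseteq E(G)$ both $A$ and $A^{c}$ are planar, whence $\gamma(A)=\gamma(A^{c})=0$. Substituting $\gamma(A)=\gamma(A^{c})=0$ and $c(G)=1$ into \eqref{G:main} collapses it to
\begin{equation*}
\gamma(G^{A})=v(G)+1-c(A)-c(A^{c}),
\end{equation*}
so maximizing $\gamma(G^{A})$ over all $A$ is precisely the same as minimizing $c(A)+c(A^{c})$.

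With this reduction in hand, the upper bound is immediate from Lemma \ref{25}: for every $A\subseteq E(G)$ we have $c(A)+c(A^{c})\geq 1+\xi(G)$, and therefore $\gamma(G^{A})\leq v(G)-\xi(G)$. Taking the maximum over $A$ gives $\gamma_{M}(G)\leq v(G)-\xi(G)$.

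For the reverse inequality I would exhibit a single subgraph attaining the bound. Choose a spanning tree $T$ of $G$ realizing the minimum in the definition of $\xi(G)$, so that $c(T^{c})=\xi(G)$, and set $A=T$. A spanning tree is connected, so $c(A)=c(T)=1$, while $c(A^{c})=c(T^{c})=\xi(G)$. Hence
\begin{equation*}
\gamma(G^{T})=v(G)+1-1-\xi(G)=v(G)-\xi(G),
\end{equation*}
which shows $\gamma_{M}(G)\geq v(G)-\xi(G)$. Combining the two inequalities yields the claimed equality.

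The only genuinely delicate point is the step that kills the two genus terms, namely that $\gamma(A)=\gamma(A^{c})=0$ for \emph{every} $A$; this rests on monotonicity of the orientable genus under edge deletion together with its nonnegativity, and it is what makes the planar case tractable. Once that is established, the upper bound is Lemma \ref{25} read verbatim and the lower bound is the one-line spanning-tree construction above, so I expect no further obstacle.
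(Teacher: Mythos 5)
Your proposal is correct and follows essentially the same route as the paper: specialize the Gross--Mansour--Tucker formula using $c(G)=1$ and the planarity of every spanning subribbon graph (so $\gamma(A)=\gamma(A^{c})=0$), then apply Lemma~\ref{25} for the upper bound. You are in fact slightly more complete than the paper's own proof, which leaves the attainment of the bound (your spanning-tree choice with $c(T^{c})=\xi(G)$) implicit.
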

\begin{proof} Let $T$ be any spanning tree of $G$, and $A\subseteq E(G)$.  Then
$$\begin {aligned}
\hskip10pt \gamma (G^{A}) &\ \leq v(G)+c(G)-(c(T)+\xi(G)) \quad\text{by Lemma \ref{25} and Theorem \ref{Gro:main}} \\
 &\= v(G)-\xi(G)\quad\text{since $c(T)=c(G)=1$}\\
 &\=\gamma_{M} (G).
\end{aligned}
$$

\end{proof}

\begin{prop}\label{29}For $k\geq 3,$  let $e_1,e_2,\ldots,e_{k-1}$ be multiple ribbons of $e$ in a planar ribbon graph $G_k$, and let $G_{2}$ be the ribbon graph obtained by deleting $k-2$ multiple ribbons from $\{e_1,e_2,\ldots,e_{k-1}\}$ of $G_k$,  then $\gamma_{M} (G_{k})=\gamma_{M} (G_{2}).$
\end{prop}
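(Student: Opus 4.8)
The plan is to push everything through the invariant $\xi$ and Theorem \ref{24}. Write $u,w$ for the common endpoints of the parallel class $P=\{e,e_1,\dots,e_{k-1}\}$, and, after relabelling, assume $G_2=G_k-\{e_2,\dots,e_{k-1}\}$, so that $G_2$ retains the two parallel ribbons $e$ and $e_1$. Deleting a ribbon that is parallel to a surviving ribbon cannot disconnect the graph, so $G_2$ is connected; deletion never creates vertices, so $v(G_k)=v(G_2)$; and a spanning subribbon graph of a planar ribbon graph is again planar, so $G_2$ is planar. Hence Theorem \ref{24} applies to both graphs and gives $\gamma_{M}(G_k)=v(G_k)-\xi(G_k)$ and $\gamma_{M}(G_2)=v(G_2)-\xi(G_2)$. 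Since the vertex counts agree, the whole proposition collapses to the single claim
\[
\xi(G_k)=\xi(G_2).
\]

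The conceptual input is that a spanning tree is acyclic and therefore contains at most one ribbon of the parallel class $P$; consequently, for every spanning tree $T$ the complement $T^{c}$ contains at least $k-1\ge 2$ ribbons of $P$, and in particular $u$ and $w$ always lie in the same component of $T^{c}$. I would first establish $\xi(G_k)\le\xi(G_2)$. Any spanning tree $T$ of $G_2$ is also a spanning tree of $G_k$, and passing from $G_2$ to $G_k$ only adjoins the parallel ribbons $e_2,\dots,e_{k-1}$ to the complement. Because at least one of $e,e_1$ already lies in the $G_2$-complement of $T$, the vertices $u,w$ are already joined there, so adding the extra parallel ribbons merges no components; thus $c(T^{c})$ is computed identically in $G_2$ and in $G_k$, and minimising over $T$ yields the inequality.

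For the reverse inequality $\xi(G_2)\le\xi(G_k)$, I would take a spanning tree $T$ of $G_k$ attaining $\xi(G_k)=c(T^{c})$ and manufacture from it a spanning tree of $G_2$ with no larger cotree-component count. If $T$ avoids all of $e_2,\dots,e_{k-1}$, it already lies in $G_2$ and the previous argument (run in reverse, deleting parallel ribbons from the complement while one parallel copy survives) leaves the component count unchanged. The one genuinely new case, and the step I expect to be the main obstacle, is when $T$ uses exactly one deleted ribbon $e_j\in\{e_2,\dots,e_{k-1}\}$; then I replace it by the swap $T'=(T\ssm\{e_j\})\cup\{e\}$. Since $e_j$ and $e$ are parallel, $T'$ is again a spanning tree, and now it lies in $G_2$. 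The crux is to verify $c_{G_2}(T'^{c})=c_{G_k}(T^{c})$: one checks that $T'^{c}$ and $T^{c}$ contain exactly the same non-parallel ribbons (for a ribbon $f\notin P$ one has $f\notin T'$ iff $f\notin T$), that each complement still contains a ribbon of $P$ joining $u$ and $w$ (namely $e_1$, which lies in both), and that the parallel ribbons contribute only the single identification $u\sim w$; hence the two spanning subgraphs induce the same partition of the vertex set and the same number of components. Minimising over $T$ then gives $\xi(G_2)\le\xi(G_k)$.

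Combining the two inequalities gives $\xi(G_k)=\xi(G_2)$, whence $\gamma_{M}(G_k)=v(G_k)-\xi(G_k)=v(G_2)-\xi(G_2)=\gamma_{M}(G_2)$, as claimed. The only delicate bookkeeping is the swap step, where one must confirm that exchanging a parallel ribbon alters neither the set of non-parallel cotree ribbons nor the $u$--$w$ connection in the complement; everything else follows from the fact that at least two parallel copies of $e$ survive in $G_2$, which is precisely why the reduction stops at $G_2$ rather than at a single ribbon.
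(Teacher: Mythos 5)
Your proof is correct and follows essentially the same route as the paper: both reduce the claim via Theorem \ref{24} to the identity $\xi(G_k)=\xi(G_2)$ and establish it through a correspondence between spanning trees of $G_k$ and of $G_2$ preserving the cotree component count. The only difference is one of detail --- the paper merely asserts that such a corresponding spanning tree exists, whereas you supply the two inequalities and the parallel-ribbon swap argument that justify it.
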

\begin{proof}
 For any spanning tree  $T_{2}$ of $G_k$, there exists a spanning tree $T_{1}$ of $G_2$ such that $c(T_{1}^{c})= c(T_{2}^{c})$,
it follows that
$$\begin {aligned}
\hskip10pt \gamma (G^{T_{2}}_{k}) &\= 1+v(G_k)-c(T_{2})-c(T_{2}^{c}) \quad\text{by Theorem \ref{Gro:main} } \\
 &\= 1+v(G_2)-c(T_{1})-c(T_{1}^{c})\quad\text{by $v(G_2)=v(G_k)$ and $c(T_{1})=c(T_{2})$  }\\
 &\= \gamma (G^{T_{1}}_{2}).
\end{aligned}
$$
Moreover, $\xi(G_{2})=\xi(G_{k})  $, i.e.,  $\gamma_{M} (G_{k})=\gamma_{M} (G_{2}).$
\end{proof}

\begin{example}Figure \ref{fig:O_{4}} illustrates the ribbon graphs $H_{4,4},$ and $H_{4,2}.$  We observe that $H_{4,2}$ can be obtained by deleting $4$ pairs of multiple ribbons in $H_{4,4},$ by Proposition \ref{29}, $\gamma_{M} (H_{4,4}
)=\gamma_{M} (H_{4,2}
)=5.$
\end{example}

\begin{figure}[h]
  \begin{minipage}[t]{0.35\textwidth}
  \centering
  \includegraphics[width=0.6\textwidth]{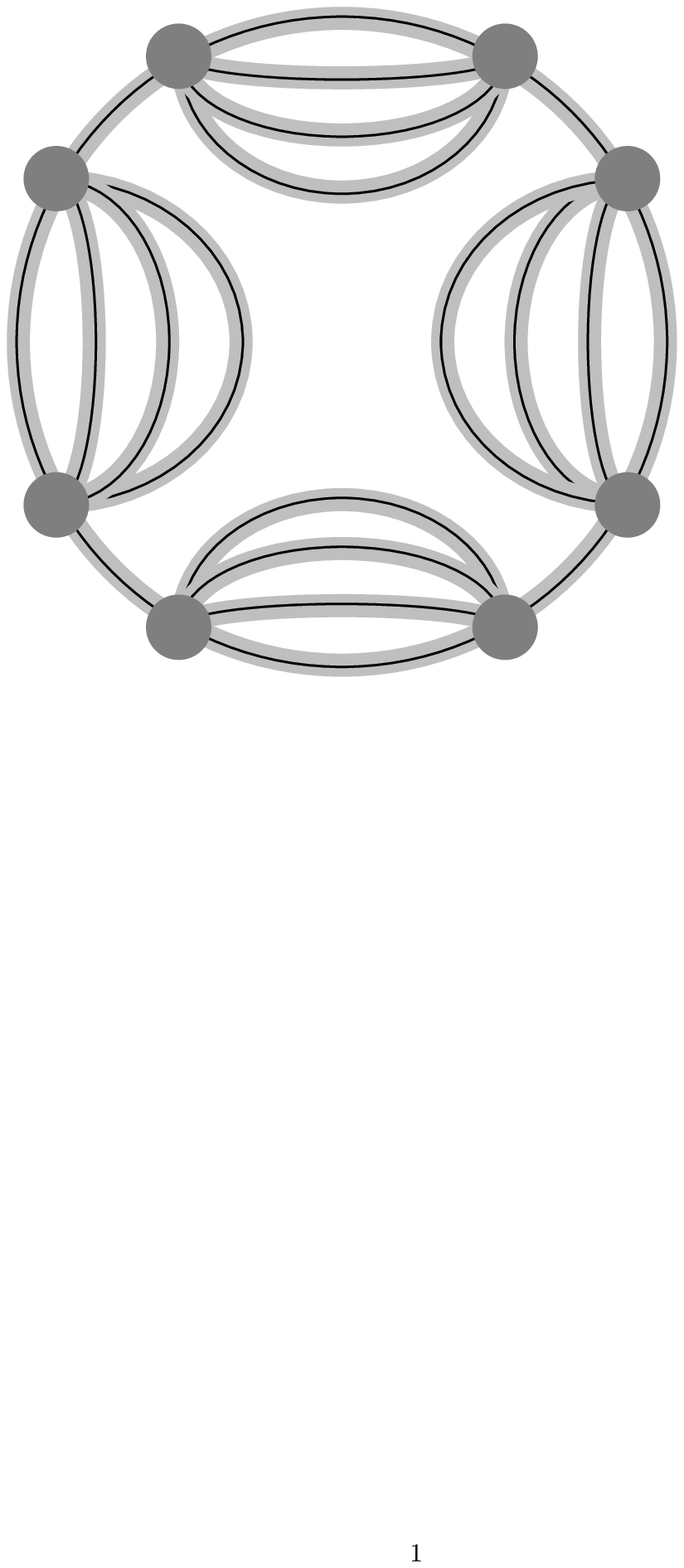}

  \end{minipage}
  \begin{minipage}[t]{0.4\textwidth}
  \centering
  \includegraphics[width=0.5\textwidth]{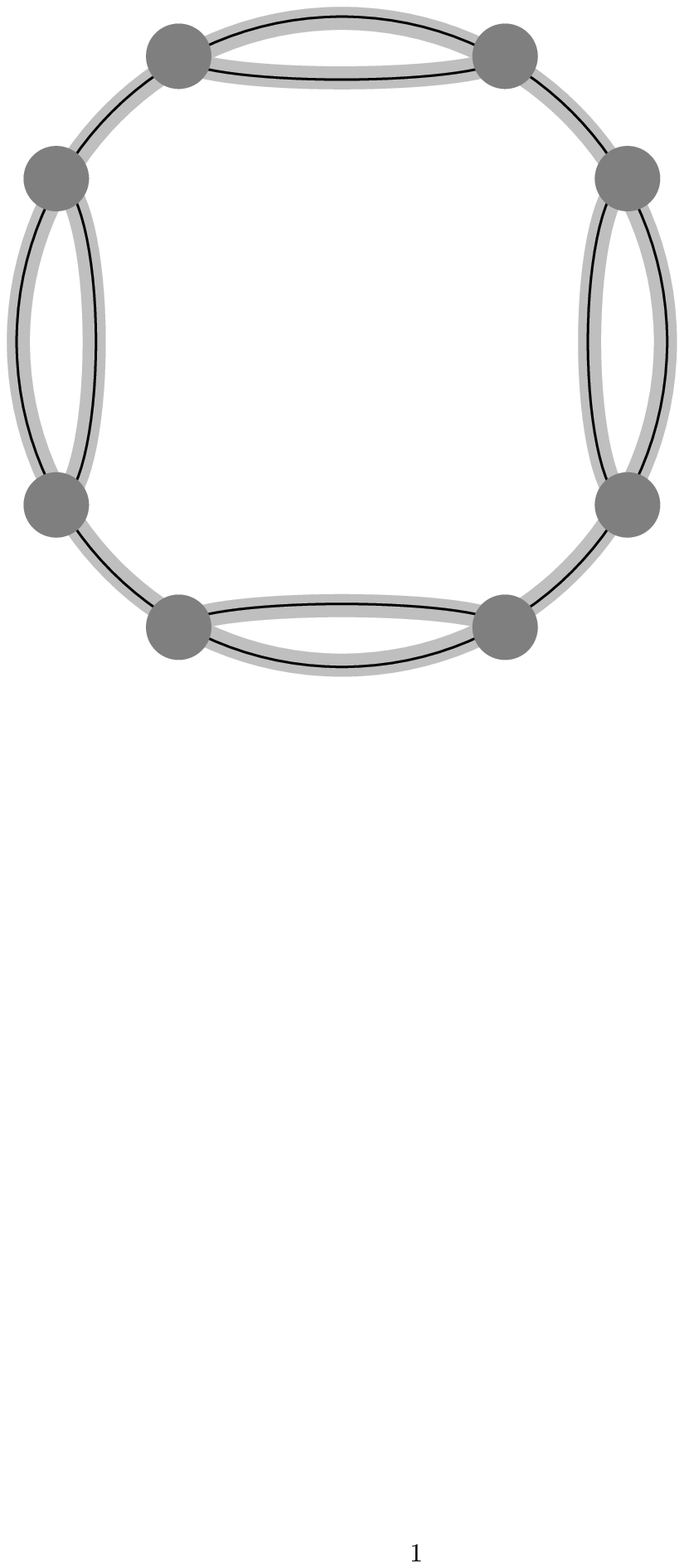}

  \end{minipage}
  \caption{ $H_{4,4}$ (left) and $H_{4,2}$ (right)}
  \label{fig:O_{4}}
  \end{figure}

We denote by $a_{\gamma_{M}}(G)$ the coefficient of $z^{\gamma_{M} (G
)}$ in the pdG-polymomial of $G.$ Let $ \Tau(G)=\{T| c(T^{c})=\xi(G),T \ \rm{is\ a\ spanning \ tree\ of\ G}\}$,   $ \Tau(G)^{c}=\{A| c(A)=\xi(G), A^{c}\ \rm{is\ a\ spanning\ tree\ of\ G}  \}$, and $ \eta(G)=\{T| T \ \rm{ and \ T^{c}\  are\ spanning\ trees}\ \\ \rm{of \ G}\}$.  The number of the set of $T$  such that  $T\in \Tau(G)$,  $T\in \Tau(G)^{c}$,  and  $T\in \eta(G)$  are denoted by $\mu(G)$, $\mu(G)^{c}$ and $|\eta(G)|$, respectively.

\begin{prop}  Let $T$ be a spanning tree of connected planar ribbon graph $G$, and let
$A\subseteq E(G)$ such that $A$, $A^{c} \neq T$. Then
\begin{enumerate}
    \item $a_{\gamma_{M}}(G)=2\mu(G),$ \quad if $ \eta(G)=\emptyset $ ~ and~ $\{A|c(A)+c(A^{c})=1+\xi(G)\}=\Tau(G)\cup\Tau(G)^{c},$
    \item $a_{\gamma_{M}}(G)> 2\mu(G),$  \quad  if $\Tau(G)\cup\Tau(G)^{c}\varsubsetneq  \{ A|c(A)+c(A^{c})=1+\xi(G)\},$
\item $a_{\gamma_{M}}(G)=| \eta(G)|,$ \quad if $ \eta(G)\neq\emptyset.$
     \end{enumerate}
 \end{prop}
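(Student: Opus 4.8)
The plan is to reduce the whole statement to counting a single set. Write
$$\mathcal{A}=\{A\subseteq E(G): \gamma(G^{A})=\gamma_{M}(G)\},$$
so that by definition $a_{\gamma_{M}}(G)=|\mathcal{A}|$. First I would observe that, since $G$ is planar, every spanning subribbon graph $A$ and its complement $A^{c}$ are planar, whence $\gamma(A)=\gamma(A^{c})=0$. Substituting this into Theorem \ref{Gro:main} with $c(G)=1$ gives $\gamma(G^{A})=1+v(G)-c(A)-c(A^{c})$, and comparing with $\gamma_{M}(G)=v(G)-\xi(G)$ from Theorem \ref{24} shows that $A\in\mathcal{A}$ precisely when $c(A)+c(A^{c})=1+\xi(G)$, the lower bound of Lemma \ref{25}. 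Thus $\mathcal{A}$ is exactly the set of minimizers of $c(A)+c(A^{c})$.

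Next I would record the elementary set-theoretic bookkeeping. Since a spanning tree $T$ has $c(T)=1$, we get $T\in\Tau(G)$ iff $c(T)+c(T^{c})=1+\xi(G)$, so $\Tau(G)\subseteq\mathcal{A}$, and complementing gives $\Tau(G)^{c}\subseteq\mathcal{A}$. The complementation map $A\mapsto A^{c}$ is a bijection of $\Tau(G)$ onto $\Tau(G)^{c}$, so $\mu(G)^{c}=\mu(G)$. Moreover $T\in\Tau(G)\cap\Tau(G)^{c}$ forces both $T$ and $T^{c}$ to be spanning trees, i.e. $\Tau(G)\cap\Tau(G)^{c}=\eta(G)$, and inclusion-exclusion yields
$$|\Tau(G)\cup\Tau(G)^{c}|=2\mu(G)-|\eta(G)|.$$
Case (1) is then immediate: its hypotheses $\mathcal{A}=\Tau(G)\cup\Tau(G)^{c}$ and $\eta(G)=\emptyset$ give $a_{\gamma_{M}}(G)=|\mathcal{A}|=2\mu(G)$.

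The heart of the argument, and the step I expect to be the main obstacle, is a structural lemma underlying both (3) and the completion of (2): \emph{if $\eta(G)\neq\emptyset$ then $\mathcal{A}=\eta(G)$}. To prove it, fix $T_{0}\in\eta(G)$; since $T_{0}$ and $T_{0}^{c}$ are spanning trees, an edge count gives $e(G)=|E(T_{0})|+|E(T_{0}^{c})|=2(v(G)-1)$ and $\xi(G)=1$, so membership in $\mathcal{A}$ reduces to $c(A)=c(A^{c})=1$. For such $A$, both $A$ and $A^{c}$ are connected spanning subribbon graphs, so $|E(A)|\ge v(G)-1$ and $|E(A^{c})|\ge v(G)-1$; as these sum to $2(v(G)-1)$, both are equalities, and a connected spanning subgraph with exactly $v(G)-1$ edges is a tree. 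Hence $A\in\eta(G)$, proving $\mathcal{A}\subseteq\eta(G)$, while $\eta(G)\subseteq\mathcal{A}$ is clear. This at once gives (3): $a_{\gamma_{M}}(G)=|\mathcal{A}|=|\eta(G)|$.

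Finally I would settle (2). Its hypothesis $\Tau(G)\cup\Tau(G)^{c}\subsetneq\mathcal{A}$ excludes $\eta(G)\neq\emptyset$, for otherwise the lemma would give $\mathcal{A}=\eta(G)\subseteq\Tau(G)\cup\Tau(G)^{c}\subseteq\mathcal{A}$, forcing equality and contradicting the strict inclusion. Therefore $\eta(G)=\emptyset$, and $a_{\gamma_{M}}(G)=|\mathcal{A}|>|\Tau(G)\cup\Tau(G)^{c}|=2\mu(G)$, as claimed. The only delicate ingredient in the whole plan is the counting identity $e(G)=2(v(G)-1)$ forced by $\eta(G)\neq\emptyset$, combined with the edge-count characterization of trees; the remaining work is routine bookkeeping with Lemma \ref{25} and Theorem \ref{Gro:main}.
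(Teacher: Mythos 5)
Your proposal is correct, and its skeleton (reduce to counting the minimizers of $c(A)+c(A^{c})$, then do set-theoretic bookkeeping with $\Tau(G)$, $\Tau(G)^{c}$ and $\eta(G)$) matches the paper's. Where you genuinely diverge is in part (3), and your route there is the stronger one. The paper argues by local perturbation: it observes that for $T\in\eta(G)$ every ribbon of $T$ and of $T^{c}$ is a cut ribbon, and that adding or removing edges of $T$ increases $c(T)+c(T^{c})$; this only inspects subsets obtained by modifying a given $T$, so on its face it does not rule out some unrelated $A$ also attaining the minimum $c(A)+c(A^{c})=2$. Your global edge count closes exactly that gap: $\eta(G)\neq\emptyset$ forces $e(G)=2(v(G)-1)$ and $\xi(G)=1$, so any $A$ with $c(A)=c(A^{c})=1$ must split the $2(v(G)-1)$ edges into two connected spanning subgraphs of at least $v(G)-1$ edges each, hence into two spanning trees, giving $\mathcal{A}=\eta(G)$ outright. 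You also make explicit two things the paper leaves implicit: that the hypothesis of (2) forces $\eta(G)=\emptyset$ (via your structural lemma), and the inclusion--exclusion identity $|\Tau(G)\cup\Tau(G)^{c}|=2\mu(G)-|\eta(G)|$ that makes (1) and (2) one-line consequences. The only cost of your approach is the preliminary observation that $\gamma(A)=\gamma(A^{c})=0$ for spanning subribbon graphs of a planar ribbon graph, which the paper also uses silently; with that noted, your write-up is a complete and somewhat tighter proof than the one in the paper.
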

\begin{proof}For  (1), because $\eta(G)=\emptyset,$ then $T^{c}$ is not a spanning tree. It is easy to see that  $\mu(G)=\mu(G)^{c}$, we have $a_{\gamma_{M}}(G)=2\mu(G).$

For (2), there is a ribbon graph $A$ that is not a spanning tree, such that $c(A)+c(A^{c})=1+\xi(G)$.\\
Now we move to prove (3). Note that if $T\in \eta(G)$ then the ribbons in $T$ and $T^{c}$ are cut edges. Adding or removing edges in $T$ will make $c(T) +c(T^{c})$ increase, and $\gamma (G^{T})=\gamma (G^{T^{c}}),$  thus  $a_{\gamma_{M}}(G)=| \eta(G)|$. The result follows.

\end{proof}

\begin{example}
Let $C_{n}$ be  the ribbon $n$-cycle  in the plane. By Example 3.3 in  \cite{GMT18},
$$~^{\partial}{\Gamma}_{C_{n}}(z)=2+(2^{n}-2)z.$$
 We observe that the loop $C_1$ satisfying
item (1) and $a_{\gamma_{M}}(C_1)=2\mu(C_1)=2.$ For $n\geq 2,$ it follows that $a_{\gamma_{M}}(C_{n})=2^{n}-2$. Since the complement of a ribbon $e$ in $E(C_2)$ is the other ribbon, it follows that $C_2$ satisfying
item (3), and $a_{\gamma_{M}}(C_2)=| \eta(C_2)|=2.$  When $n\geq 4$, let $T$ be any spanning tree of $C_{n}$, then $T\in \Tau(C_{n})$, and $\mu(C_{n})=n$, note that $a_{\gamma_{M}}(C_{n})=2^{n}-2>2n$,  it implies that $C_n$ satisfying
item (2).

\end{example}

\subsection{A counter-example to the interpolating conjecture on partial-dual Euler-genus polynomial}

  The  \textit{{join}} of two ribbon graphs $G_{1}$ and $G_{2}$, denoted by $G_{1}\vee G_{2}$, is obtained by the following two steps:
\begin{enumerate}
\item Choose an arc $p_{1}$ on the boundary of a vertex-disk $v_{1}$ of $G_{1}$ that lies between two consecutive ribbon ends, and choose another such arc $p_{2}$ on the boundary of a vertex-disk $v_{2}$ of $G_{2}$.
\item Paste vertex-disk $v_{1}$ and vertex-disk $v_{2}$ together by identifying the arcs $p_{1}$ and $p_{2}$.
 \end{enumerate}

The definition of join is given by \cite{GMT18, Mof13}.

 Given a polynomial $f(z) = \sum_{i=0}^{\infty} f_iz^i$, the \textit{spectrum }of the polynomial $f(z)$ is the set $\{i|f_i \neq 0\}$. If the spectrum of $f(z)$ is an integer interval $[m,n]$, then we say $f(z)$ is an \textit{interpolating polynomial}. We denote the \textit{partial-dual Euler-genus polynomial } of a ribbon graph G as $~^{\partial}{\Eu}_{G}(z) =\sum\limits_{A\subseteq E(G)}z^{\eu[G^{A}]},$ that enumerates partial duals by Euler-genus. We call the spectrum of $~^{\partial}{\Eu}_{G}(z)$ the \textit{partial-dual spectrum} of $G.$ In \cite{GMT18}, Gross, Mansour and Tucker proved that the pdG-polynomial is interpolating. They also conjectured that the partial-dual Euler-genus polynomial of any non-orientable graph is interpolating. Here we give a negative answer to their conjecture by using the properties of their paper.

Suppose $G$ is a planar ribbon graph with partial-dual Euler-genus polynomial $~^{\partial}{\Eu}_{G}(z) =\dsum_{i=0}^{m}z^{2k}$, where $\eu_k(G)$ is the number of partial duals of $G$ into surface $S_{2k}$ with Euler genus $2k.$ Let $B_1$ be a ribbon graph with one vertex-disk and a twisted loop. Then $B_1$ is in the projective plane and $~^{\partial}{\Eu}_{B_1}(z) = 2z.$ For $m\geq 2,$ we let $B_m = B_{m-1}\vee B_1.$ 
Then from Proposition 3.2 in \cite{GMT18}, $~^{\partial}{\Eu}_{B_m}(z) = (2z)^m,$ for $m\geq 1.$ Using Proposition 3.2 in \cite{GMT18} again, we have that the partial-dual Euler-genus polynomial for the join of a plane graph $G$ and $B_n$ equals
$~^{\partial}{\Eu}_{G\vee B_n}(z) = \dsum_{k=0}^{n}2^m\eu_k(G)z^{2k+m}$.  Thus, there exists a non-orientable ribbon graph with the partial-dual spectrum ${n,n + 2,\ldots,n + 2m}$ for  positive integers $m,n.$

\begin{rem} Recall that other counter-examples to the interpolation conjecture above were given independently by Jin and Yan \cite{YJ19}, their paper  predates ours.  They pointed out that the minimum number of edges required for the counter example is $4.$ By the construction above, we give a counter-example with $3$ edges. Let $C_2$ be the planar 2-cycle ribbon graph, it's easy to see that its partial-dual Euler-genus polynomial is $2+2z^2,$ we use it to join a twisted ribbon $B_1$, we find that $~^{\partial}{\Eu}_{C_2\vee B_1}(z)=4z+4z^3.$
\end{rem}


\section{Three theorems}
\subsection{A recursive formula for the pdG-polynomials of planar ribbon graphs }

Suppose $G$ is a connected planar ribbon graph, and let $e$ be a ribbon with $V(e)\subseteq V(G)$. We denote $G-e$ to be the ribbon graph obtained by deleting $e$ from $G,$ and  we let $ \mathscr{A}=\{A|A\cup e \rm{\ contains\ a\ cycle\ with}\ e\ ~ in~ G, A\subset E(G-e)\}.$ Here we give an example to illustrate the above definitions.
\begin{figure}[h]
  \begin{minipage}[t]{0.6\textwidth}
  \centering
  \includegraphics[width=1\textwidth]{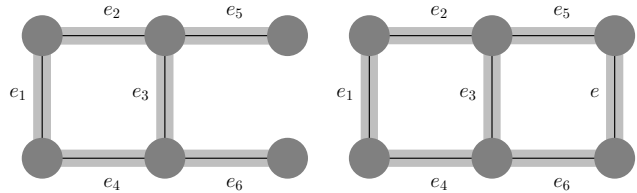}

  \end{minipage}
   \caption{$G-e$ and $G$}
  \label{fig:ladder}
  \end{figure}

\begin{example}
Suppose $G$ and $G-e$ are ribbon graphs of Figure \ref{fig:ladder}. There are two cycles containing $e$ in  $G.$ The two such cycles are $(e_{5},e_{3},e_{6},e)$ and $(e_{5},e_{2},e_{1},e_{4},e_{6},e).$   It is easy to see that
$$
\begin{aligned}
 \mathscr{A}
 &=\{\{e_{5},e_{3},e_{6}\}, \{e_{5},e_{3},e_{6},e_{1}\}, \{e_{5},e_{3},e_{6},e_{2}\}, \{e_{5},e_{3},e_{6},e_{4}\}, \{e_{5},e_{3},e_{6},e_{1},e_{2}\},\\
 &\ \ \ \ \{e_{5},e_{3},e_{6},e_{1},e_{4}\}, \{e_{5},e_{3},e_{6},e_{2},e_{4}\},\{e_{5},e_{3},e_{6},e_{1},e_{2},e_{4}\}, \{e_{5},e_{2},e_{1},e_{4},e_{6}\}\}.
 \end{aligned}
$$

\end {example}

 Now we present our main result.

 \begin{thm}\label{main}
Let $G$ be a connected planar ribbon graph, and let $e$ be one of its ribbons. If ${G-e}$ is a connected ribbon graph, then
$~^{\partial}{\Gamma}_{G}(z)=2z~^{\partial}{\Gamma}_{G-e}(z)+(2-2z)\sum\limits_{A\in\mathscr{A}} z^{\gamma[(G-e)^{A}]}$.

\end{thm}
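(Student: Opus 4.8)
The plan is to organize the defining generating function $\,{}^{\partial}{\Gamma}_{G}(z)=\sum_{B\subseteq E(G)}z^{\gamma[G^{B}]}\,$ according to whether the distinguished ribbon $e$ lies in $B$. Writing $E(G)=E(G-e)\cup\{e\}$, every $B$ is either $B=A$ with $A\subseteq E(G-e)$ (the case $e\notin B$) or $B=A\cup e$ with $A\subseteq E(G-e)$ (the case $e\in B$). So I would first split the polynomial into the two partial sums $\sum_{A}z^{\gamma[G^{A}]}$ and $\sum_{A}z^{\gamma[G^{A\cup e}]}$, and then rewrite each exponent in terms of $\gamma[(G-e)^{A}]$.

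The main tool is the genus formula of Theorem \ref{Gro:main}. Since $G$ is connected and planar, every spanning subribbon graph has genus $0$ and $c(G)=1$, so the formula collapses to $\gamma(G^{B})=1+v(G)-c(B)-c(B^{c})$, where $c(\cdot)$ counts components over the common vertex set $V(G)$ (hence the count is the same whether computed in $G$ or in $G-e$). The same collapse applied to the connected planar graph $G-e$ gives $\gamma[(G-e)^{A}]=1+v(G)-c(A)-c(E(G-e)\setminus A)$. I would then compare component counts: the complement of $A$ in $G$ is $(E(G-e)\setminus A)\cup\{e\}$, while the complement of $A\cup e$ in $G$ is $E(G-e)\setminus A$. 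Adjoining $e$ to a spanning subgraph either decreases the component count by one (when $e$ joins two components) or leaves it unchanged (when both ends of $e$ already lie in a single component, i.e.\ when $e$ closes a cycle), and the latter alternative is exactly the membership condition defining $\mathscr{A}$. Concretely, for the $e\in B$ sum one gets $\gamma[G^{A\cup e}]=\gamma[(G-e)^{A}]$ when $A\in\mathscr{A}$ and $\gamma[(G-e)^{A}]+1$ otherwise; for the $e\notin B$ sum the very same dichotomy is governed by whether $E(G-e)\setminus A$ lies in $\mathscr{A}$.

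The step carrying the real content is the observation that the two partial sums are in fact \emph{equal}. The collapsed formula for $G-e$ is symmetric under $A\leftrightarrow E(G-e)\setminus A$, so $\gamma[(G-e)^{A}]=\gamma[(G-e)^{E(G-e)\setminus A}]$. Substituting the complement as the summation variable in the $e\notin B$ sum turns both its exponents and its $\mathscr{A}$-condition into exactly those of the $e\in B$ sum, whence each equals $S:=\sum_{A\in\mathscr{A}}z^{\gamma[(G-e)^{A}]}+\sum_{A\notin\mathscr{A}}z^{\gamma[(G-e)^{A}]+1}$. I would finish by pulling the factor $z$ out of the second sum and using $\sum_{A\subseteq E(G-e)}z^{\gamma[(G-e)^{A}]}={}^{\partial}{\Gamma}_{G-e}(z)$, which gives $S=z\,{}^{\partial}{\Gamma}_{G-e}(z)+(1-z)\sum_{A\in\mathscr{A}}z^{\gamma[(G-e)^{A}]}$; doubling yields the asserted identity.

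The main obstacle is the careful bookkeeping of component counts under the three operations $A\mapsto A\cup e$, complementation in $G$ versus in $G-e$, and the identification of the cycle-through-$e$ condition with $\mathscr{A}$; once the symmetry $A\leftrightarrow E(G-e)\setminus A$ is secured the remaining algebra is routine. I would also confirm at the outset that every spanning subribbon graph of a planar ribbon graph is planar (hence has genus $0$), which is what legitimizes the collapse of Theorem \ref{Gro:main}, and note that connectivity of both $G$ and $G-e$ is used precisely to set $c(G)=c(G-e)=1$.
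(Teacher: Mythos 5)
Your proposal is correct and follows essentially the same route as the paper: both exploit the complementation symmetry $\gamma[G^{B}]=\gamma[G^{B^{c}}]$ to reduce to (twice) the sum over subsets containing $e$, and then split that sum according to whether $e$ is a cut ribbon in $A\cup e$ or closes a cycle (i.e.\ $A\in\mathscr{A}$), using the planar collapse of Theorem \ref{Gro:main} to compare $\gamma[G^{A\cup e}]$ with $\gamma[(G-e)^{A}]$. The only cosmetic difference is that you verify the dichotomy on both halves and then match them, whereas the paper identifies the two halves first and analyzes one.
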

\begin{proof}
Let $\mathscr{F}=\{ F|e\in F, F\subseteq E(G)\}$,
and let $\mathscr{F}^{c}=\{ F^{c}|  F \in \mathscr{F}\}$.  Obviously, we have
$
\gamma[G^{F^{c}}]
=\gamma[G^{F}].
 $
Thus
\begin{eqnarray}
~^{\partial}{\Gamma}_{G}(z) &=& \sum\limits_{F\in \mathscr{F}} z^{\gamma[G^{F}]}+\sum\limits_{F^{c}\in \mathscr{F}^{c}} z^{\gamma[G^{F^{c}}]}\notag\\
&=&2\sum\limits_{F\in \mathscr{F}} z^{\gamma[G^{F}]}. \label{de0}
\end{eqnarray}

Now let $F=A\cup e$, then the complement of $A$ in  $E(G-e)$ is equal to the complement of F in $E(G).$ Therefore, we have
\begin{eqnarray}
c(A^{c}) &=& c(F^{c}).    \label{de1}
\end{eqnarray}

If  $e$ is a cut ribbon in $A\cup e$,  then $A \notin\mathscr{A}$, and
 \begin{eqnarray}
c(F) &=&c(A)-1.  \label{de2}
\end{eqnarray} It follows that
$$
\begin{aligned}
\gamma[G^{F}]&=c(G)+v(G)-[c(F)+c(F^{c})]\quad \text{by (\ref{G:main})}\\
  &=c(G-e)+v(G-e)-[c(A)-1+c(A^{c})]  \quad\text{by  (\ref{de1}) and  (\ref{de2}})\\
  &=\gamma[(G-e)^{A}]+1.
  \end{aligned}
$$
and
\begin{eqnarray}
\sum\limits_{F=A\cup e,A \notin\mathscr{A}} z ^{\gamma[G^{F}]} &=&\sum\limits_{A \notin\mathscr{A}} z ^{\gamma[(G-e)^{A}]+1}\notag\\
 &=&z\sum\limits_{A \notin\mathscr{A}} z ^{\gamma[(G-e)^{A}]}.\label{de7}
 \end{eqnarray}
Otherwise, we have
 $A \in\mathscr{A}$ and
 \begin{eqnarray}
c(F) &=&c(A).  \label{de3}
\end{eqnarray}  Thus
$$
\begin {aligned}
\gamma[G^{F}]&=c(G)+v(G)-[c(F)+c(F^{c})]\quad \text{by (\ref{G:main})}\\
  &=c(G-e)+v(G-e)-[c(A)+c(A^{c})]  \quad\text{by  (\ref{de1}) and  (\ref{de3}}) \\
  &=\gamma[(G-e)^{A}].
  \end {aligned}
$$
Moreover,
\begin{eqnarray}
\sum\limits_{F=A\cup e,A\in\mathscr{A}} z ^{\gamma[G^{F}]} &=&\sum\limits_{A\in\mathscr{A}} z ^{\gamma[(G-e)^{A}]}.  \label{de5}
\end{eqnarray}
It is easy to show that
\begin{eqnarray}
~^{\partial}{\Gamma}_{G-e}(z) &=&\sum\limits_{A\notin\mathscr{A}} z ^{\gamma[(G-e)^{A}]}+\sum\limits_{A\in\mathscr{A}} z^{\gamma[(G-e)^{A}]}.  \label{de4}
\end{eqnarray}
From the discussions above, we have

\begin{eqnarray}\label{mmmain}
~^{\partial}{\Gamma}_{G}(z) &=& 2\sum\limits_{F\in \mathscr{F}} z^{\gamma[G^{F}]}\quad\text{by  (\ref{de0})}\notag \\
&=& 2(\sum\limits_{F=A\cup e,A \notin\mathscr{A}} z ^{\gamma[G^{F}]} +\sum\limits_{F=A\cup e,A\in\mathscr{A}} z ^{\gamma[G^{F}]})\notag \\
&=& 2(z\sum\limits_{A \notin\mathscr{A}} z ^{\gamma[(G-e)^{A}]} +\sum\limits_{A\in\mathscr{A}} z^{\gamma[(G-e)^{A}]}) \quad\text{by  (\ref{de7})~ and~(\ref{de5}) }\notag \\
&=&2z~^{\partial}{\Gamma}_{G-e}(z)+(2-2z)\sum\limits_{A\in\mathscr{A}} z^{\gamma[(G-e)^{A}]}\quad\text{by  (\ref{de4}) }
\end{eqnarray}
The result follows.

\end{proof}

Similarly, we have the following theorem.
\begin{thm}\label{main1}
Let $G$ be a connected planar ribbon graph and $e$ be one of its ribbons. If ${G-e}$ is a connected ribbon graph, then
$~^{\partial}{\Gamma}_{G}(z)=2~^{\partial}{\Gamma}_{G-e}(z)+(2z-2)\sum\limits_{A\in\mathscr{A}} z^{\gamma[(G-e)^{A}]},$ where $\mathscr{A}=\{A| e \rm{\ is\ a\ cut\ ribbon\ in}\ A\cup e, A\subset E(G-e)\}.$

\end{thm}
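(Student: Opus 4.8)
The plan is to recognize that the set $\mathscr{A}$ appearing in this statement --- the collection of $A\subseteq E(G-e)$ for which $e$ is a cut ribbon in $A\cup e$ --- is precisely the complement, among all $A\subseteq E(G-e)$, of the set $\mathscr{A}$ used in Theorem~\ref{main}: for a fixed $A$, the ribbon $e$ is a cut ribbon in $A\cup e$ if and only if $A\cup e$ does \emph{not} contain a cycle through $e$, and these two cases are mutually exclusive and exhaustive. Consequently the computation in the proof of Theorem~\ref{main} can be reused verbatim up to its final algebraic rearrangement, the only change being that the terms are now collected around the cut-ribbon set rather than the cycle set.

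Concretely, I would again set $\mathscr{F}=\{F\mid e\in F,\ F\subseteq E(G)\}$ and use the complementation involution $F\mapsto F^{c}$ together with $\gamma[G^{F^{c}}]=\gamma[G^{F}]$ to obtain, as in (\ref{de0}),
\[
~^{\partial}{\Gamma}_{G}(z)=2\sum_{F\in\mathscr{F}}z^{\gamma[G^{F}]}.
\]
Writing each $F\in\mathscr{F}$ as $F=A\cup e$ with $A\subseteq E(G-e)$, I invoke the two genus identities already established in the proof of Theorem~\ref{main}, read against the present labelling of $\mathscr{A}$: if $e$ is a cut ribbon in $A\cup e$, i.e.\ $A\in\mathscr{A}$, then $\gamma[G^{F}]=\gamma[(G-e)^{A}]+1$; whereas if $A\cup e$ contains a cycle through $e$, i.e.\ $A\notin\mathscr{A}$, then $\gamma[G^{F}]=\gamma[(G-e)^{A}]$. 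Both identities follow from (\ref{G:main}) via the relation $c(A^{c})=c(F^{c})$ and the dichotomy $c(F)=c(A)-1$ versus $c(F)=c(A)$.

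Splitting the sum over $\mathscr{F}$ along this dichotomy then gives
\[
~^{\partial}{\Gamma}_{G}(z)=2\Bigl(z\sum_{A\in\mathscr{A}}z^{\gamma[(G-e)^{A}]}+\sum_{A\notin\mathscr{A}}z^{\gamma[(G-e)^{A}]}\Bigr).
\]
To finish, I eliminate the cycle-set sum rather than the cut-set sum: from $~^{\partial}{\Gamma}_{G-e}(z)=\sum_{A\in\mathscr{A}}z^{\gamma[(G-e)^{A}]}+\sum_{A\notin\mathscr{A}}z^{\gamma[(G-e)^{A}]}$ I substitute $\sum_{A\notin\mathscr{A}}z^{\gamma[(G-e)^{A}]}=~^{\partial}{\Gamma}_{G-e}(z)-\sum_{A\in\mathscr{A}}z^{\gamma[(G-e)^{A}]}$, and collecting the coefficient of the cut-set sum yields $2~^{\partial}{\Gamma}_{G-e}(z)+(2z-2)\sum_{A\in\mathscr{A}}z^{\gamma[(G-e)^{A}]}$, as required. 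There is no genuine analytic obstacle here; the only delicate point is the bookkeeping of which sum is eliminated. This single choice is exactly what turns the prefactor $2z$ on $~^{\partial}{\Gamma}_{G-e}$ in Theorem~\ref{main} into the prefactor $2$ here, and the weight $(2-2z)$ into $(2z-2)$.
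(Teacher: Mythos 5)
Your proposal is correct and is exactly the argument the paper intends: the paper proves Theorem~\ref{main1} only by the word ``Similarly,'' meaning the proof of Theorem~\ref{main} with the complementary dichotomy (cut ribbon versus cycle through $e$) and with the other sum eliminated via $~^{\partial}{\Gamma}_{G-e}(z)=\sum_{A\in\mathscr{A}}z^{\gamma[(G-e)^{A}]}+\sum_{A\notin\mathscr{A}}z^{\gamma[(G-e)^{A}]}$, which is precisely what you do. No further comment is needed.
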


\begin{cor}\label{1/2}
Let $G$ be a connected planar ribbon graph and  $e$ be one of its ribbons, then
 $\sum\limits_{e\in F,F\subseteq E(G)} z^{\gamma[G^{F}]}
=\frac{1}{2} ~^{\partial}{\Gamma}_{G}(z)$.
\end{cor}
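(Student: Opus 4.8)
The plan is to prove Corollary \ref{1/2} by exploiting the complementation symmetry of the partial-dual genus that was already established inside the proof of Theorem \ref{main}. The key observation, recorded as equation (\ref{de0}) above, is that $\gamma[G^{F^{c}}]=\gamma[G^{F}]$, which comes directly from the formula (\ref{G:main}) of Theorem \ref{Gro:main}: that formula is symmetric in $A$ and $A^{c}$, so replacing a subset by its complement leaves the genus unchanged.

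First I would partition the full index set $\{F\mid F\subseteq E(G)\}$ into the collection $\mathscr{F}=\{F\mid e\in F\}$ of subsets containing the ribbon $e$ and its complementary collection $\mathscr{F}^{c}=\{F^{c}\mid F\in\mathscr{F}\}$, which is exactly the collection of subsets \emph{not} containing $e$. These two families are disjoint and together exhaust all of $E(G)$, and the map $F\mapsto F^{c}$ is a bijection between them. The sum appearing in the statement, $\sum_{e\in F}z^{\gamma[G^{F}]}$, is precisely the sum over $\mathscr{F}$.

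Next I would write the full pdG-polynomial as the sum over both families,
\begin{eqnarray*}
~^{\partial}{\Gamma}_{G}(z) &=& \sum\limits_{F\in \mathscr{F}} z^{\gamma[G^{F}]}+\sum\limits_{F^{c}\in \mathscr{F}^{c}} z^{\gamma[G^{F^{c}}]},
\end{eqnarray*}
and then use $\gamma[G^{F^{c}}]=\gamma[G^{F}]$ to replace each exponent in the second sum, so that the second sum equals the first. This collapses the right-hand side to $2\sum_{F\in\mathscr{F}}z^{\gamma[G^{F}]}$, which is exactly step (\ref{de0}) in the proof of Theorem \ref{main}. Dividing by $2$ and recognizing $\sum_{F\in\mathscr{F}}z^{\gamma[G^{F}]}=\sum_{e\in F,\,F\subseteq E(G)}z^{\gamma[G^{F}]}$ yields the claim.

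There is essentially no obstacle here: the entire content is the involution $F\mapsto F^{c}$ together with the genus-symmetry already derived. The only point requiring a moment's care is verifying that $F\mapsto F^{c}$ is a genuine bijection between $\mathscr{F}$ and $\mathscr{F}^{c}$ with no fixed points or double-counting, i.e. that no subset both contains and fails to contain $e$; this is immediate. Indeed, since this identity is literally line (\ref{de0}) of the earlier proof, the corollary is an immediate restatement, and I would simply cite that equation rather than re-derive it.
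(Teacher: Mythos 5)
Your proposal is correct and follows exactly the paper's route: the authors likewise reduce the corollary to equation (\ref{de0}), which is precisely the complementation argument $\gamma[G^{F^{c}}]=\gamma[G^{F}]$ together with the bijection $F\mapsto F^{c}$ between subsets containing $e$ and those not containing it. You merely re-derive that line in more detail rather than citing it, so there is no substantive difference.
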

\begin{proof}
Let $\mathscr{F}=\{ F|e\in F, F\subseteq E(G)\}$,  thus the corollary follows from  (\ref{de0}).

\end{proof}

\subsection{Planar ribbon graphs with multiple ribbons} Here we will give a recursive formula for the pdG-polynomials of planar ribbon graphs with multiple edges.

\begin{thm}\label{multiple}
Let $G$ be a connected planar ribbon graph and let  $e_{1}$ be one of its ribbons. Let $e_{i}$ be the multiple ribbon of $e_{1}$, $2\leq i\leq n$, and let $G\cup \{e_{2},\cdots, e_{n} \}$ be the planar ribbon graph obtained by inserting edges $e_{i}$ parallel to  $e_{1}$.
If $G-e_{1}$ is a connected  ribbon graph, then
\begin{enumerate}

\item Let $\mathscr{A}_{1}=\{A| A\cup e_{2} \rm{\ contains\ a\ cycle\ with\ e_{2}}  \rm{\ in}\  G\cup e_{2}, e_{1}\notin A, A\subset E(G)\}$. If $e_{1}$ is a cut ribbon in  $A^{c},$ for any $A\in \mathscr{A}_{1},$ then
\begin{gather}
~^{\partial}{\Gamma}_{G\cup e_{2}}(z)= (2z+1)~^{\partial}{\Gamma}_{G}(z)-2z^{2}~^{\partial}{\Gamma}_{G-e_{1}}(z),\label{eq:11}
\end{gather}
\noindent otherwise  \begin{gather}
\ ~^{\partial}{\Gamma}_{G\cup e_{2}}(z)=(2z+1)~^{\partial}{\Gamma}_{G}(z)-2z^{2}~^{\partial}{\Gamma}_{G-e_{1}}(z)+2(1-z)^{2}\sum\limits_{A\in \mathscr{A}_{2}}  z^{\gamma[G^{A}]},\label{eq:12}
\end{gather}
where  $\mathscr{A}_{2}=\{A| A\cup e_{2} \rm{\ contains\ a\ cycle\ with}\ e_{2}\ \rm{\ and}\ A^{c} \rm{\ contains\ a}\ \rm{cycle\ with}$ $\ e_{1}\ \rm{\ in}\  G\cup e_{2},  A\subset E(G)   \}.$
\item For $n\geq 3,$ then
\begin{eqnarray}
\ ~^{\partial}{\Gamma}_{G\cup \{e_{2},\cdots, e_{n} \}}(z)&= &(2^{n-1}-1) ~^{\partial}{\Gamma}_{G\cup e_{2}}(z)-(2^{n-1}-2) ~^{\partial}{\Gamma}_{G}(z).\notag
\end{eqnarray}
\end{enumerate}

\end{thm}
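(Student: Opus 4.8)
The plan is to handle the two parts of Theorem~\ref{multiple} separately, since part~(2) reduces cleanly to part~(1) by an inductive telescoping, whereas part~(1) is where the genuine bookkeeping lives.

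For part~(1), I would begin exactly as in the proof of Theorem~\ref{main}: apply that recurrence to the single added ribbon $e_2$ inside the graph $G\cup e_2$. Writing $(G\cup e_2)-e_2 = G$, Theorem~\ref{main} immediately gives
\begin{equation*}
~^{\partial}{\Gamma}_{G\cup e_2}(z)=2z\,~^{\partial}{\Gamma}_{G}(z)+(2-2z)\sum\limits_{A\in\mathscr{A}_1} z^{\gamma[G^{A}]},
\end{equation*}
where $\mathscr{A}_1$ is precisely the set defined in the statement (the subsets $A\subseteq E(G)$ with $e_1\notin A$ for which $A\cup e_2$ closes a cycle through $e_2$). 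The whole task then becomes evaluating the correction sum $\sum_{A\in\mathscr{A}_1} z^{\gamma[G^{A}]}$ and re-expressing it through $~^{\partial}{\Gamma}_{G}(z)$ and $~^{\partial}{\Gamma}_{G-e_1}(z)$. The key geometric observation is that $e_1$ and $e_2$ are parallel ribbons, so $A\cup e_2$ contains a cycle through $e_2$ if and only if $A$ together with $e_1$ (the ribbon parallel to $e_2$) already connects the two endpoints, i.e. $A\cup e_1$ closes a cycle through $e_1$ in $G$. I would therefore partition $\mathscr{A}_1$ according to whether the ``parallel slot'' $e_1$ is a cut ribbon in $A^c$. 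When $e_1$ is always a cut ribbon in $A^c$ (the hypothesis of \eqref{eq:11}), one can identify $\mathscr{A}_1$ with the family $\{B\subseteq E(G): e_1\in B,\ B\cup e_2 \text{ closes a cycle through } e_2\}$ and use the computation inside the proof of Theorem~\ref{main} (equations \eqref{de2} and \eqref{de3}) together with Corollary~\ref{1/2} to convert the sum into the clean combination $z\,~^{\partial}{\Gamma}_{G}(z)-z^2\,~^{\partial}{\Gamma}_{G-e_1}(z)$. Substituting back yields \eqref{eq:11}. In the general case, the residual subsets are exactly those where $A^c$ also closes a cycle through $e_1$, which is the defining condition of $\mathscr{A}_2$; these contribute the extra $2(1-z)^2\sum_{A\in\mathscr{A}_2} z^{\gamma[G^A]}$ term, giving \eqref{eq:12}.

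For part~(2), I would argue by induction on $n$, or more transparently observe that each additional parallel ribbon $e_j$ is a multiple ribbon so that, by the genus-invariance afforded by Proposition~\ref{29} together with the cut-ribbon casework of equations \eqref{de2}--\eqref{de3}, inserting $e_j$ contributes a factor that depends only on how many of the parallel ribbons lie in the chosen subset. Concretely, for a subset of $G\cup\{e_2,\dots,e_n\}$ the genus of the partial dual is governed only by whether zero, one, or at least two of the $n$ parallel ribbons are selected together with the cycle structure in $G$; summing the geometric multiplicities $\binom{n-1}{k}$ over these cases and collapsing them against the two ``base'' polynomials $~^{\partial}{\Gamma}_{G\cup e_2}(z)$ (one parallel ribbon present) and $~^{\partial}{\Gamma}_{G}(z)$ (none present) produces the coefficients $2^{n-1}-1$ and $-(2^{n-1}-2)$. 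This is a clean linear-algebra collapse once the per-ribbon contribution is isolated.

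The main obstacle I anticipate is the \emph{casework on whether $e_1$ is a cut ribbon in $A^c$} inside part~(1): the correspondence between ``$A\cup e_2$ closes a cycle through $e_2$'' and the cycle/cut status of the parallel ribbon $e_1$ has to be tracked carefully through equation~\eqref{de1}'s identity $c(A^c)=c(F^c)$, and the $(1-z)^2$ rather than $(1-z)$ factor in \eqref{eq:12} signals that a \emph{second} application of the complementary-pairing trick (each relevant subset is being matched with a partner differing in the $e_1/e_2$ slots) is required. Getting the exact family $\mathscr{A}_2$ to emerge — and confirming that the non-$\mathscr{A}_2$ subsets in $\mathscr{A}_1$ telescope precisely into $z\,~^{\partial}{\Gamma}_{G}(z)-z^2\,~^{\partial}{\Gamma}_{G-e_1}(z)$ with no leftover terms — is the delicate computational heart of the argument; everything else is a substitution.
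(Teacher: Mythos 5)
Your overall strategy for part (1) --- apply Theorem~\ref{main} to the ribbon $e_2$ of $G\cup e_2$ and then re-express the correction sum via a second application of Theorem~\ref{main} to $e_1$ in $G$ --- is the same as the paper's, but your opening displayed equation is wrong, and the error propagates. Theorem~\ref{main} applied to $e_2$ sums over \emph{all} $A\subset E(G)$ such that $A\cup e_2$ contains a cycle through $e_2$. Because $e_1$ and $e_2$ are parallel, every $A$ with $e_1\in A$ qualifies (via the $2$-cycle $\{e_1,e_2\}$), yet your index set $\mathscr{A}_1$ explicitly excludes these. The paper isolates them as $\mathscr{A}_3=\{A\mid e_1\in A\}$ and uses Corollary~\ref{1/2} to show they contribute exactly $\tfrac12\,{}^{\partial}\Gamma_{G}(z)$ to the correction sum; after multiplying by $(2-2z)$ this is the $(1-z)\,{}^{\partial}\Gamma_{G}(z)$ needed to turn your coefficient $3z$ into the correct $2z+1$. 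Your subsequent ``identification'' of $\mathscr{A}_1$ with $\{B: e_1\in B,\dots\}$ cannot be right (every such $B$ closes the $2$-cycle, so that family is all of $\mathscr{A}_3$, which is much larger than $\mathscr{A}_1$), and the claimed closed form $\sum_{A\in\mathscr{A}_1}z^{\gamma[G^A]}=z\,{}^{\partial}\Gamma_{G}(z)-z^2\,{}^{\partial}\Gamma_{G-e_1}(z)$ fails already for $G=D_2$: there ${}^{\partial}\Gamma_{G}=2+2z$, ${}^{\partial}\Gamma_{G-e_1}=2$, and your recipe yields $8z$ instead of the correct ${}^{\partial}\Gamma_{D_3}=2+6z$. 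The correct route (the paper's Case 1) is the bijection $A\mapsto\overline{A}=A$ between $\mathscr{A}_1$ and the family $\overline{\mathscr{A}}$ of subsets of $E(G-e_1)$ closing a cycle with $e_1$, under which the cut-ribbon hypothesis gives $\gamma[G^A]=\gamma[(G-e_1)^{\overline A}]+1$; one then eliminates $\sum_{\overline{\mathscr{A}}}$ using Theorem~\ref{main} applied to $e_1$ in $G$, and the $\mathscr{A}_2$ subsets are precisely those where this genus shift by $1$ fails, producing the extra $2(1-z)^2$ term in \eqref{eq:12}.

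For part (2) your idea is sound and actually differs from the paper: you observe that for a planar ribbon graph $\gamma[G_n^A]$ depends on the parallel class only through whether none, all, or a proper nonempty subset of the $n$ parallel ribbons lies in $A$, so the $2^n$ choices collapse onto ${}^{\partial}\Gamma_{G}$ (the ``none/all'' pair) and $\tfrac{2^n-2}{2}\bigl({}^{\partial}\Gamma_{G\cup e_2}-{}^{\partial}\Gamma_{G}\bigr)$ (the mixed choices), which gives the stated coefficients directly. The paper instead derives the linear recursion ${}^{\partial}\Gamma_{G_n}=2\,{}^{\partial}\Gamma_{G_{n-1}}+{}^{\partial}\Gamma_{G_2}-2\,{}^{\partial}\Gamma_{G_1}$ from Theorem~\ref{main1} by showing the cut-ribbon correction sum is independent of $n$. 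Your version is arguably more transparent, but as written it is only a sketch; the substantive gap in your proposal is the missing $\mathscr{A}_3$ contribution in part (1).
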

\begin{proof}
Here we define  $\mathscr{A}=\{A| A\cup e_{2} \rm{\ contains\ a\ cycle\ with}\ e_{2}~ in~  G\cup e_{2},   A\subset E(G)\}$, and $\mathscr{\overline{A}}=\{\overline{A}| \overline{ A}\cup e_1 \rm{\ contains\ a\ cycle\ with}\ e_{1}\  \rm{\ in}\  G,   \overline{A}\subset E(G-e_{1})\}$. We observe that two multiple edges $e_{2}$ and $e_{1}$ is a $2$-cycle in  $G\cup e_{2}$,
then
we let $\mathscr{A}_{3}=\{A|  e_{1}\in A,  A\subset E(G)\}$. Thus, applying Corollary \ref{1/2}, we have \begin{eqnarray}
\sum\limits_{A\in \mathscr{A}_{3}} z^{\gamma[G^{A}]}&=& \frac{1}{2} ~^{\partial}{\Gamma}_{G}(z). \label{m1}
\end{eqnarray}
For any  planar multiple ribbon graph $G\cup e_{2}$, we partition the calculation of the item $(1)$ into  two cases.

$\mathbf{Case~ 1}:$  Assume $e_{1}$ is a cut ribbon in $A^{c}$, for any $A\in \mathscr{A}_{1}$. Now, the ribbon subsets of $G\cup e_{2}$ forming a circle with $e_{2}$ are divided into two cases: $A\in \mathscr{A}_{1}$ and $A\in \mathscr{A}_{3}$, i.e.,  $\mathscr{A}=\mathscr{A}_{1}\cup \mathscr{A}_{3}$. For any $A\in \mathscr{A}_{1}$, there exists $\overline{A}\in \mathscr{\overline{A}}$, such that $A=\overline{A}$. Nevertheless, $A^{c}=\overline{A}^{c}\cup e_{1},$  thus,
\begin{eqnarray}
c(A^{c})&=& c(\overline{A}^{c})-1, \label{m41}
\end{eqnarray}
 \begin{eqnarray}
\gamma[G^{A}]&=&c(G)+v(G)-c(A)-c(A^{c})\quad\text{by Theorem~\ref{Gro:main}}\notag\\
&=&c(G-e_{1})+v(G-e_{1})-c(\overline{A})-c(\overline{A}^{c})+1 \quad\text{by (\ref{m41})}\notag\\
&=&\gamma[(G-e_{1})^{\overline{A}}]+1. \label{m2}
\end{eqnarray}

 Moreover,  we have
 \begin{eqnarray}
 \sum\limits_{A\in \mathscr{A}} z^{\gamma[G^{A}]}&=&\sum\limits_{A\in \mathscr{A}_{1}} z^{\gamma[G^{A}]}+\sum\limits_{A\in \mathscr{A}_{3}} z^{\gamma[G^{A}]}\notag\\
&=&z\sum\limits_{\overline{A}\in \mathscr{\overline{A}}} z^{\gamma[(G-e_{1})^{\overline{A}}]}+\frac{1}{2}~^{\partial}{\Gamma}_{G}(z) \quad\text{by (\ref{m1}) and (\ref{m2})}. \label{m3}
\end{eqnarray}

$\mathbf{Case~ 2}:$ There exists  $A\in \mathscr{A}_{1},$ such that $e_{1}$ is  not a cut ribbon in  $A^{c}$. That is,  $e_{1}$ in $A^{c}$ is either a cut ribbon or a ribbon on the circle, thus we let $\mathscr{A}_{2}=\{A| A\cup e_{2} \rm{\ contains\ a\ cycle\ with\ e_{2}\ \ and\ A^{c}\  contains\ a\ cycle\ with \ e_{1}\ in}\ G\cup e_{2},  A\subset E(G)   \}$. For any $A\in \mathscr{A}_{1}$ or $A\in \mathscr{A}_{2}$,
 there exists $\overline{A}$ such that $A=\overline{A}$, and $A^{c}=\overline{A}^{c}\cup e_{1}$. In other words,   when $A\in \mathscr{A}_{2}$,   we have
\begin{eqnarray}
c(A^{c})&=& c(\overline{A}^{c}), \label{m5} \quad\text{by the definition of $\mathscr{A}_{2}$}
\end{eqnarray}
and
\begin{eqnarray}
\gamma[G^{A}]&=&c(G)+v(G)-c(A)-c(A^{c})\quad\text{by Theorem~\ref{Gro:main}}\notag\\
&=&c(G-e_{1})+v(G-e_{1})-c(\overline{A})-c(\overline{A}^{c}) \quad\text{by (\ref{m5})}\notag\\
&=&\gamma[(G-e_{1})^{\overline{A}}]. \label{m22}
\end{eqnarray}

When $A\in \mathscr{A}_{1}$, this statement can be proved in the same way as shown in Case $1$.
 Consequently, by (\ref{m2}) and (\ref{m22}), we infer that
\begin{eqnarray}\label{m6}
\sum\limits_{\overline{A}\in \mathscr{\overline{A}}} z^{\gamma[(G-e_{1})^{\overline{A}}]}&=&\frac{1}{z}\sum\limits_{A\in \mathscr{A}_{1}} z^{\gamma[G^{A}]}+\sum\limits_{A\in \mathscr{A}_{2}}z^{\gamma[G^{A}]}.
\end{eqnarray}
 According to the previous analysis,  we divide the ribbon subsets of $G\cup e_{2}$ forming a circle with $e_{2}$  into three cases: $A\in \mathscr{A}_{1}$,  $A\in \mathscr{A}_{2}$ and $A\in \mathscr{A}_{3}$, i.e., $\mathscr{A}=\mathscr{A}_{1}\cup  \mathscr{A}_{2} \cup \mathscr{A}_{3}$.

Furthermore,
\begin{eqnarray}\label{E:1}
\sum\limits_{A\in \mathscr{A}} z^{\gamma[G^{A}]}&=&\sum\limits_{A\in \mathscr{A}_{1}} z^{\gamma[G^{A}]}+\sum\limits_{A\in \mathscr{A}_{2}} z^{\gamma[G^{A}]}+\sum\limits_{A\in \mathscr{A}_{3}} z^{\gamma[G^{A}]}\notag\\
&=&z(\sum\limits_{\overline{A}\in \mathscr{\overline{A}}} z^{\gamma[(G-e_{1})^{\overline{A}}]}-\sum\limits_{A\in \mathscr{A}_{2}} z^{\gamma[G^{A}]})+\sum\limits_{A\in \mathscr{A}_{2}} z^{\gamma[G^{A}]} \notag\\
 &&\+\frac{1}{2} ~^{\partial}{\Gamma}_{G}(z)\quad\text{by (\ref{m6})}\notag\\
&=&z\sum\limits_{\overline{A}\in \mathscr{\overline{A}}} z^{\gamma[(G-e_{1})^{\overline{A}}]}+(1-z)\sum\limits_{A\in \mathscr{A}_{2}} z^{\gamma[G^{A}]}+\frac{1}{2} ~^{\partial}{\Gamma}_{G}(z)
\end{eqnarray}

Theorem \ref{main} implies that
\begin{eqnarray}
~^{\partial}{\Gamma}_{G\cup e_{2}}(z)&=&2z~^{\partial}{\Gamma}_{G}(z)+(2-2z)\sum\limits_{A\in \mathscr{A}} z^{\gamma[G^{A}]},\label{eq:3}\\
~^{\partial}{\Gamma}_{G}(z)&=&2z~^{\partial}{\Gamma}_{G-e_{1}}(z)+(2-2z)\sum\limits_{\overline{A}\in \mathscr{\overline{A}}} z^{\gamma[(G-e_{1})^{\overline{A}}]}.\label{eq:4}
\end{eqnarray}
Combining  (\ref{m3}), (\ref{eq:3}) and
(\ref{eq:4}), we can get (\ref{eq:11}).
By (\ref{E:1})-(\ref{eq:4}),
 we get (\ref{eq:12}).

Now we give a proof for item (2). For simplicity, we may take $G_{n}=G-e_{1}\cup \{e_{1},e_{2},\cdots, e_{n} \}$, note that $G_{1}=G$ and $G_{2}=G\cup e_{2}$.  Here we let $ \mathscr{C}_{i}=\{A_{i}|  e_{i+1} \rm{\ is\ a\ cut\ ribbon\ in}\ A_{i}\cup e_{i+1},\  A_{i}\subset E(G_{i})\},$ where $i= 1,\cdots, n-1.$   For each $A_{n-1}\in\mathscr{C}_{n-1}$, we can choose  $A_{1}\in\mathscr{C}_{1}$, such that $A_{1}=A_{n-1}$.
However, the complement of $A_{1}$ in $E(G_{1})$ is obtained from the complement of
 $A_{n-1}$ in $E(G_{n-1})$ by deleting the multiple edges, and deleting the multiple edges does not change the number of components,
 thus,
\begin{eqnarray}
c(A_{n-1}^{c})&=& c(A_{1}^{c}), \label{m4}
\end{eqnarray}
 \begin{eqnarray}
\gamma[(G_{n-1})^{A_{n-1}}]&=&c(G_{n-1})+v(G_{n-1})-c(A_{n-1})-c(A_{n-1}^{c})\quad\text{by Theorem~\ref{Gro:main}}\notag\\
&=&c(G_{1})+v(G_{1})-c(A_{1})-c(A_{1}^{c}) \quad\text{by (\ref{m4})}\notag\\
&=&\gamma[G_{1}^{A_{1}}].\label{m8}
\end{eqnarray}
 and
 \begin{eqnarray}\label{E:3}
\sum\limits_{A_{n-1}\in \mathscr{C}_{n-1}} z^{\gamma[(G_{n-1})^{A_{n-1}}]}
&=&\sum\limits_{A_{1}\in \mathscr{C}_{1}} z^{\gamma[G_{1}^{A_{1}}]}\quad\text{by (\ref{m8})}\notag\\
&=& \frac{~^{\partial}{\Gamma}_{G_{2}}(z)-2~^{\partial}{\Gamma}_{G_{1}}(z)}{2z-2}\quad\text{by Theorem \ref{main1}}
\end{eqnarray}
Finally, we have

\begin{eqnarray}
~^{\partial}{\Gamma}_{G_{n}}(z)
&=&2~^{\partial}{\Gamma}_{G_{n-1}}(z)+(2z-2)\sum\limits_{A_{n-1}\in \mathscr{C}_{n-1}} z^{\gamma[(G_{n-1})^{A_{n-1}}]}\quad\text{by Theorem \ref{main1}}\notag\\
&=&2~^{\partial}{\Gamma}_{G_{n-1}}(z)+(2z-2)\frac{~^{\partial}{\Gamma}_{G_{2}}(z)-2~^{\partial}{\Gamma}_{G_{1}}(z)}{2z-2} \quad\text{by  (\ref{E:3})}\notag\\
&=&2~^{\partial}{\Gamma}_{G_{n-1}}(z)+~^{\partial}{\Gamma}_{G_{2}}(z)-
2~^{\partial}{\Gamma}_{G_{1}}(z)\notag\\
&=&(2^{n-1}-1) ~^{\partial}{\Gamma}_{G\cup e_{2}}(z)-(2^{n-1}-2) ~^{\partial}{\Gamma}_{G}(z),
\end{eqnarray} where $n\geq 3.$

\end{proof}

A \textit{subdivision} of a ribbon graph $G$ is obtained by replacing  ribbion $e=uv$ by a path $uwv.$
\begin{thm}\label{wn}
\cite{GMT18} Given a ribbon graph $G$ and a ribbon $e.$ Let $K$ be a subdivision of $G,$ then\\
\begin{equation}
~^{\partial}{\Gamma}_{K}(z)=
\begin{cases}
 2~^{\partial}{\Gamma}_{G}(z),&\text{if e is a cut ribbon,}\\
 ~^{\partial}{\Gamma}_{G}(z)+(2z)~^{\partial}{\Gamma}_{G-e}(z),&\text{if e is non-separating.}
\end{cases}
\end{equation}
\end{thm}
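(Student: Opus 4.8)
The plan is to classify the spanning ribbon subgraphs of $K$ according to how they meet the two halves of the subdivided edge, and then to read off $\gamma[K^{B}]$ from the formula of Theorem \ref{Gro:main}. Write $e=uv$ and let the subdivision replace $e$ by the path $u\,e'\,w\,e''\,v$, so that $V(K)=V(G)\cup\{w\}$ and $E(K)=E(G-e)\cup\{e',e''\}$. For $B\subseteq E(K)$ put $A=B\cap E(G-e)\subseteq E(G-e)$ and split into four classes according to whether $e'$ and $e''$ lie in $B$. The only input beyond Theorem \ref{Gro:main} is the topological fact that the genus of a spanning ribbon subgraph is unchanged both by subdividing one of its edge-ribbons and by attaching a pendant edge-ribbon (each is a homeomorphism of the underlying surface); I would record this as a one-line lemma.

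Next I would evaluate each class. When neither $e'$ nor $e''$ lies in $B$, the vertex $w$ is isolated and $B$ (resp. $B^{c}$) is $A$ (resp. the subdivision of $A^{c}\cup e$, with $A^{c}=E(G-e)\setminus A$); substituting into Theorem \ref{Gro:main} and cancelling the isolated vertex against $v(K)=v(G)+1$ gives $\gamma[K^{B}]=\gamma[G^{A}]$, viewing $A\subseteq E(G)$ with $e\notin A$. When both $e',e''$ lie in $B$, the same computation (now with $w$ isolated in $B^{c}$) yields $\gamma[K^{B}]=\gamma[G^{A\cup e}]$. Hence the two unmixed classes together run over all subsets of $E(G)$ exactly once and contribute ${}^{\partial}{\Gamma}_{G}(z)$. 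In each of the two mixed classes (exactly one of $e',e''$ present), $w$ is a pendant vertex of both $B$ and $B^{c}$, so the genus terms reduce to those of $A$ and $A^{c}$ in $G-e$ and one finds $\gamma[K^{B}]=\gamma[(G-e)^{A}]+\bigl(1+c(G)-c(G-e)\bigr)$; the two mixed classes give identical contributions.

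It remains to assemble the four contributions, and here the two cases of the statement split off. If $e$ is non-separating then $c(G-e)=c(G)$, so the shift above is $+1$ and the mixed classes contribute $2\sum_{A\subseteq E(G-e)}z^{\gamma[(G-e)^{A}]+1}=2z\,{}^{\partial}{\Gamma}_{G-e}(z)$; adding the ${}^{\partial}{\Gamma}_{G}(z)$ from the unmixed classes gives exactly ${}^{\partial}{\Gamma}_{G}(z)+2z\,{}^{\partial}{\Gamma}_{G-e}(z)$.

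If $e$ is a cut ribbon then $c(G-e)=c(G)+1$, the shift is $0$, and the mixed classes contribute $2\sum_{A\subseteq E(G-e)}z^{\gamma[(G-e)^{A}]}$. To finish I would prove the identity $\sum_{A\subseteq E(G-e)}z^{\gamma[(G-e)^{A}]}=\tfrac12\,{}^{\partial}{\Gamma}_{G}(z)$ for a bridge $e$: since $u,v$ are separated in $G-e$, the edge $e$ is a bridge in every $A^{c}\cup e$, so Theorem \ref{Gro:main} gives both $\gamma[G^{A}]=\gamma[(G-e)^{A}]$ and $\gamma[G^{A\cup e}]=\gamma[G^{A}]$, whence splitting ${}^{\partial}{\Gamma}_{G}(z)$ over $e\in A'$ versus $e\notin A'$ yields twice the displayed sum. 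Adding the ${}^{\partial}{\Gamma}_{G}(z)$ from the unmixed classes then gives $2\,{}^{\partial}{\Gamma}_{G}(z)$, as claimed. I expect the main obstacle to be the careful component bookkeeping for the new vertex $w$ in each of the four classes---distinguishing its isolated, pendant, and path-interior behaviour---together with the recognition, in the cut-ribbon case, that partial duality along a bridge preserves the genus.
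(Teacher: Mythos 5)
Your proof is correct. Note that the paper does not actually prove this statement: Theorem \ref{wn} is imported from \cite{GMT18} with a citation and no argument, so there is no in-paper proof to compare against. Your derivation is a sound self-contained one, and it is very much in the spirit of the techniques the paper does use elsewhere: the four-way split of $B\subseteq E(K)$ according to which of $e',e''$ it contains, followed by an application of Theorem \ref{Gro:main} with careful bookkeeping of $c(B)$, $c(B^c)$ and $v(K)=v(G)+1$, mirrors the proof of Theorem \ref{main}, and your bridge identity $\sum_{A\subseteq E(G-e)}z^{\gamma[(G-e)^{A}]}=\tfrac12\,{}^{\partial}\Gamma_{G}(z)$ is the exact analogue of Corollary \ref{1/2}. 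I verified the key steps: the invariance of ribbon-subgraph genus under subdivision and under attaching a pendant ribbon follows from $\gamma=c-\tfrac12(v-e+f)$ since each operation increments $v$ and $e$ by one and fixes $c$ and $f$; the shift $1+c(G)-c(G-e)$ in the mixed classes is right; and in the cut-ribbon case $e$ is indeed a bridge of both $A\cup e$ and $A^{c}\cup e$ for every $A\subseteq E(G-e)$, which gives $\gamma[G^{A}]=\gamma[G^{A\cup e}]=\gamma[(G-e)^{A}]$ and hence the factor of $2$. One small caveat worth recording explicitly: the non-separating case tacitly includes loops, for which the "path $u\,e'\,w\,e''\,v$" degenerates to a 2-cycle at $u=v$; the computation goes through unchanged, but you should say so.
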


\begin{example}\label{thm:2m2}
A \textit{suspension} of two graphs $G_1$ and $G_2$, denoted by $G_{1}+G_{2},$ is obtained by adjoining each vertex of $G_{1}$  to each vertex of $G_{2}$. Let $P_{m+2}=v_{2}u_{1}u_{2}\cdots u_{m}v_{3}$ be a path graph. A \textit{standard fan graph} $F_{(2,m,2)}$ is obtained by adding  multiple edge of $v_{1}v_{2}$ and multiple edge of $v_{1}v_{3}$ to  $P_{m+2}+\{v_1\},$ as shown in Figure \ref{fig:F_{2}}.

\begin{figure}[h]
  \centering
  \includegraphics[width=0.4\textwidth]{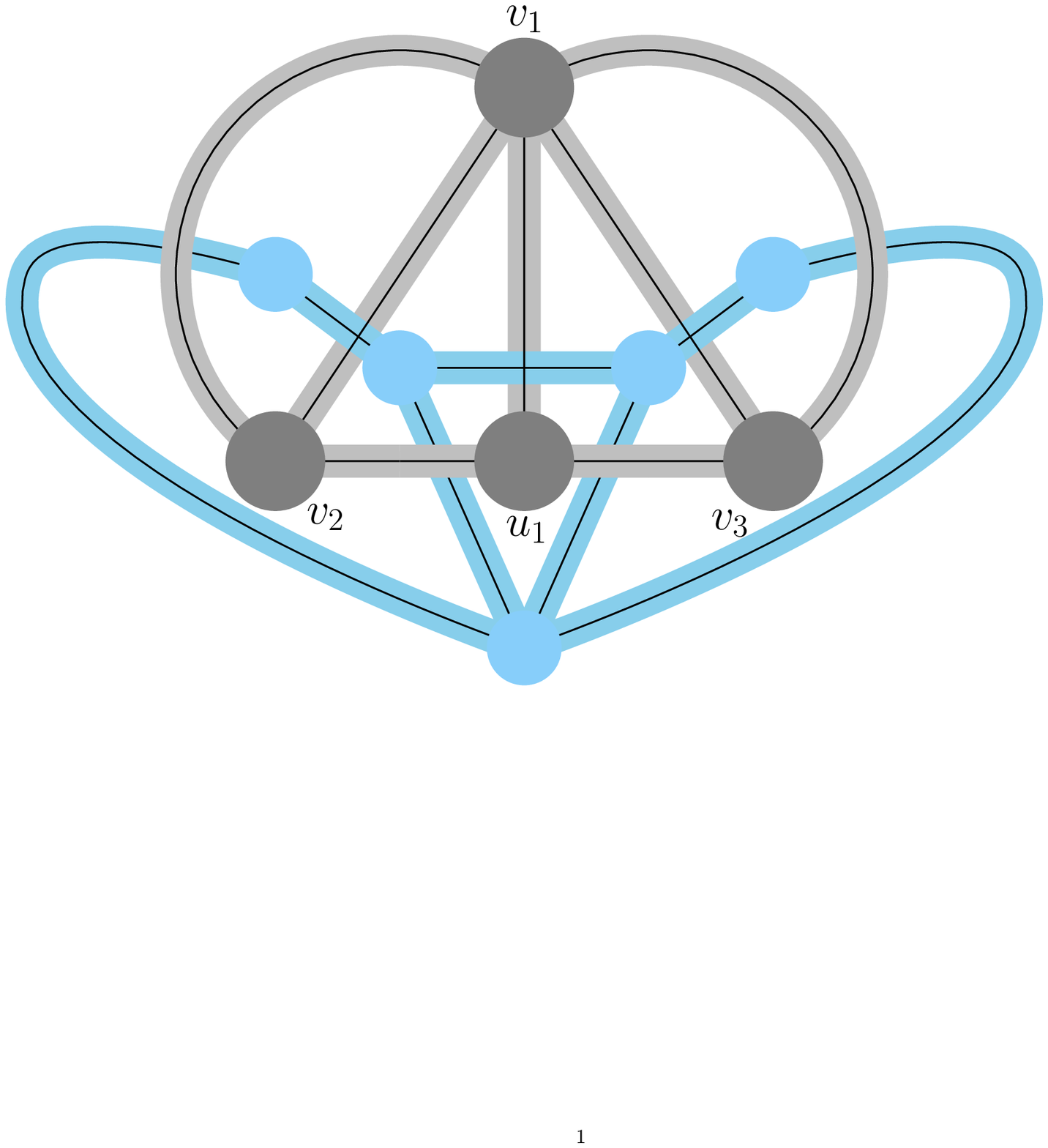}
 \caption{$F_{(2,1,2)}$ and it's dual graph $F_{(2,1,2)}^{*}$}
 \label{fig:F_{2}}
\end{figure}

  It is clear that the  ribbon graph $F_{(2,m,2)}  ^{*}$ is isomorphic to $F_{(1,m+1,1)}$. Let $Q_1$ be a ribbon  and let $Q_{m+3}=F_{(1,m+1,1)}$. It's easy to see that there are $m+2$ $3$-cycles in $Q_{m+3}$.
 \begin{figure}[h]
\centering
\includegraphics[width=1.7in]{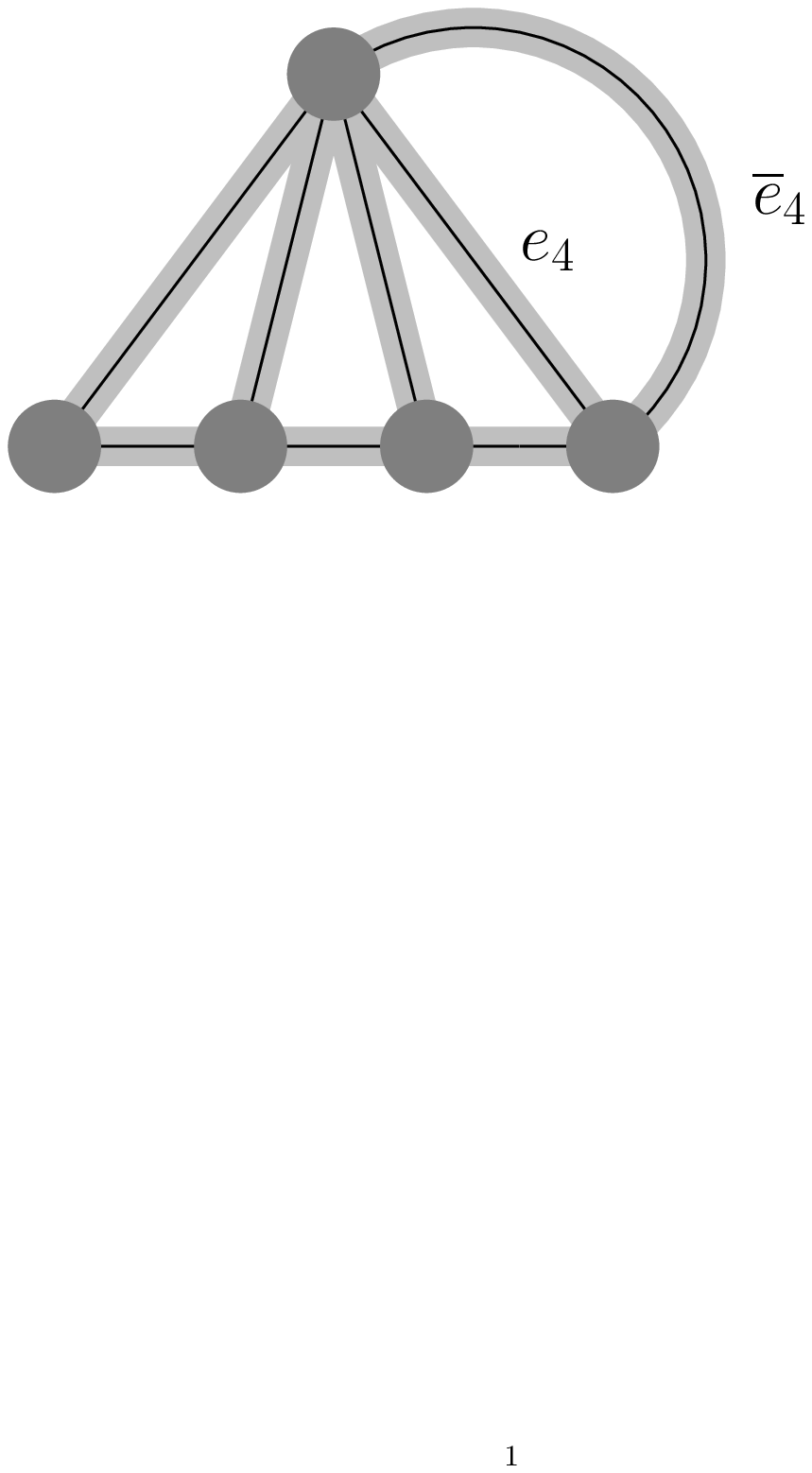}
\includegraphics[width=1.5in]{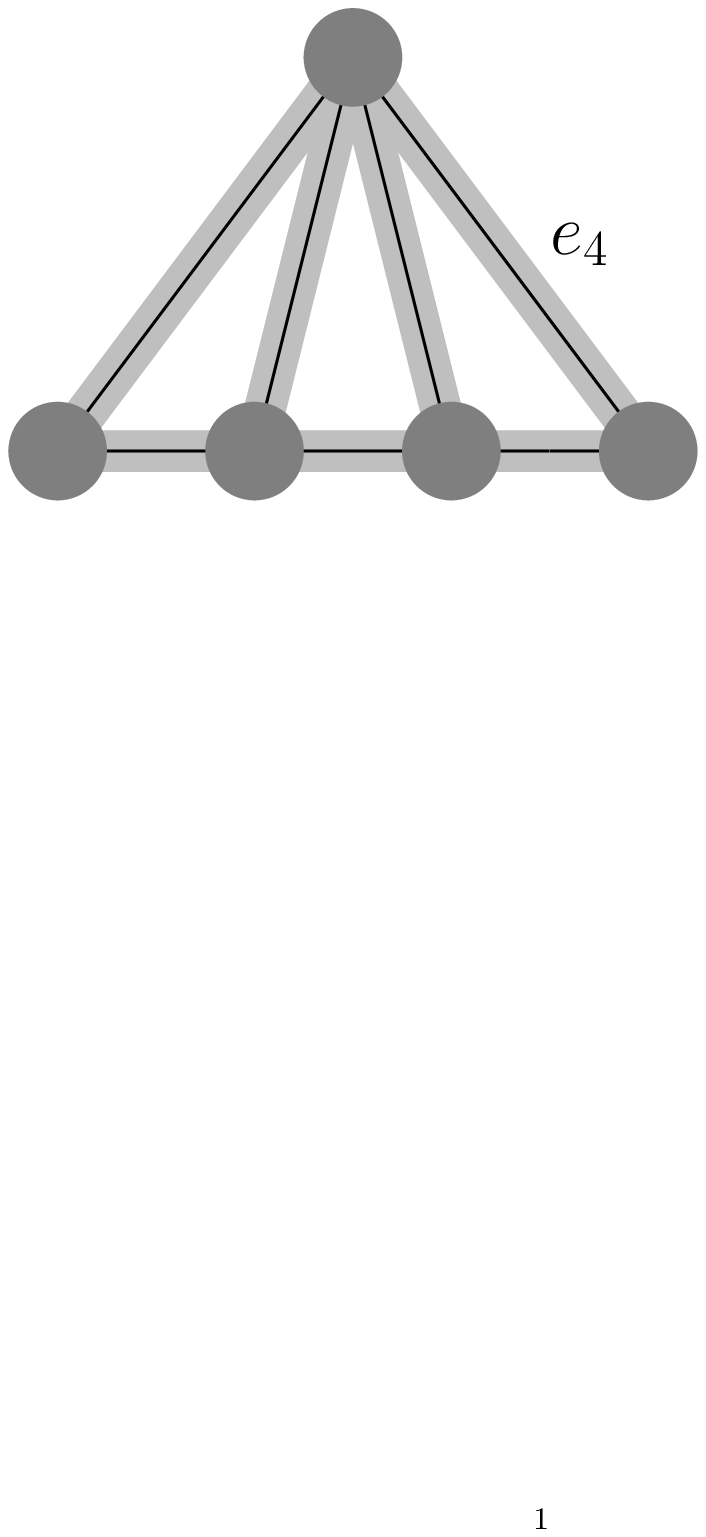}

 \caption{$\overline{Q}_{4}$ and $Q_{4}$}
  \label{fig:O_{5}}
\end{figure}
 By adding a multiple ribbon $\overline{e}_{n}$ to $e_{n}$ in $Q_{n},$ the resulting ribbon graph is $\overline{Q}_{n}$, as shown in Figure \ref{fig:O_{5}}. By  Proposition 3.2 in  \cite{GMT18},
\begin{eqnarray}\label{E:8}
~^{\partial}{\Gamma}_{\overline{Q}_{n}}(z)&\=(2z+1)~^{\partial}{\Gamma}_{Q_{n}}(z)-2z^{2}~^{\partial}{\Gamma}_{Q_{n}-e_{n}}(z)
&\text{by (\ref{eq:11}) }\notag\\
&\=(2z+1)~^{\partial}{\Gamma}_{Q_{n}}(z)-4z^{2}~^{\partial}{\Gamma}_{Q_{n-1}}(z).
\end{eqnarray}
Since $Q_{n+1}$ can be view as $\overline{Q}_{n}$ by subdividing ribbon $\overline{e}_{n}$  once,  by Theorem \ref{wn} and Equation (\ref{E:8}), we get
\begin{equation}
\begin{split}
 ~^{\partial}{\Gamma}_{Q_{n+1}}(z)&=~^{\partial}{\Gamma}_{\overline{Q}_{n}}(z)+2z~^{\partial}{\Gamma}_{Q_{n}}(z)
 \notag\\
&=(4z+1)~^{\partial}{\Gamma}_{Q_{n}}(z)-4z^{2}~^{\partial}{\Gamma}_{Q_{n-1}}(z)
\end{split}
\end{equation}
\noindent with initial conditions $~^{\partial}{\Gamma}_{Q_{1}}(z)=2,$ and $~^{\partial}{\Gamma}_{Q_{2}}(z)=6z+2.$ Solving the equation above, we get \begin{equation}\label{equ:add:Qn}\begin{split}~^{\partial}{\Gamma}_{F_{(2,m,2)}}(z)&=~^{\partial}{\Gamma}_{Q_{m+3}}(z)\\
&= \sum \limits_{k= 0}^{m+3}g(k,m+2-k)- zg(k,m+1-k),\end{split}\end{equation} where
\begin{equation}
g(n,k)=
\begin{cases}
 (-1)^{k}2^{2k+1}(^{n}_{k})(1+4z)^{n-k}z^{2k},&\text{if $0\leq k\leq n,$}\\
 0,&\text{if $k> n$ or $k< 0.$}
\end{cases}
\end{equation}

\end{example}

The graph formed by adding an additional vertex adjacent to each vertex on $C_{n}$ is called a \textit{wheel} graph $W_{n}$. By attaching a multiple edge
$\overline{e}_{2n-1}$ to $e_{2n-1}$ in $W_{n}$, the resulting graph is $\overline{W}_{n}$. A labeling of the ribbons of $W_{n}$ and $\overline{W}_{n}$ is shown in Figure \ref{fig:w_{n}}.

\begin{example}\label{w_{n}}
For $n\geq 3$, let $W_{n}$ be a planar ribbon wheel graph. An outline of the proof of Example \ref{w_{n}} is as follows.
\begin{figure}[h]

  \begin{minipage}[t]{0.55\textwidth}
  \centering
  \includegraphics[width=.99\textwidth]{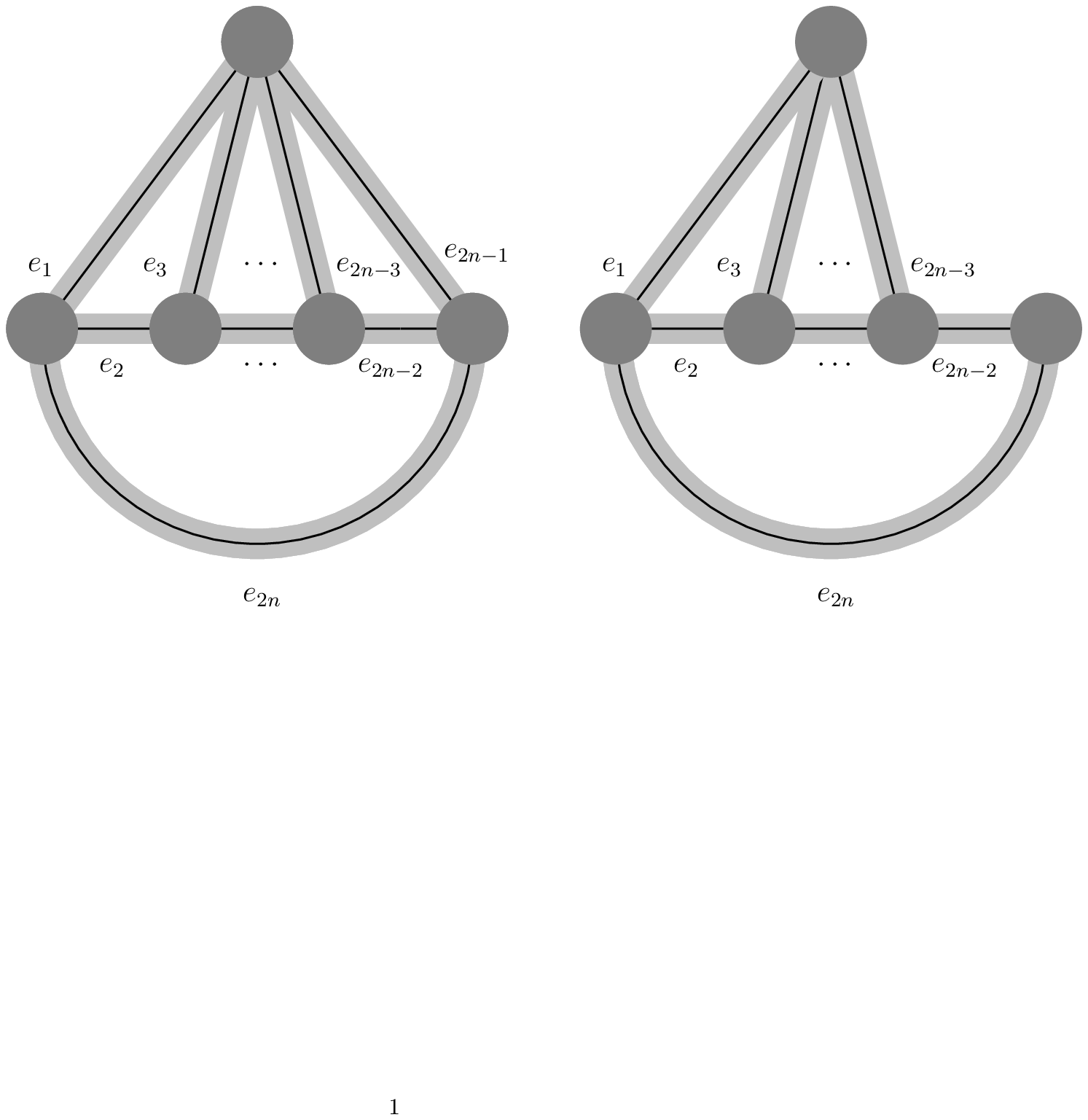}

  \end{minipage}
  \begin{minipage}[t]{0.34\textwidth}
  \centering
  \includegraphics[width=1\textwidth]{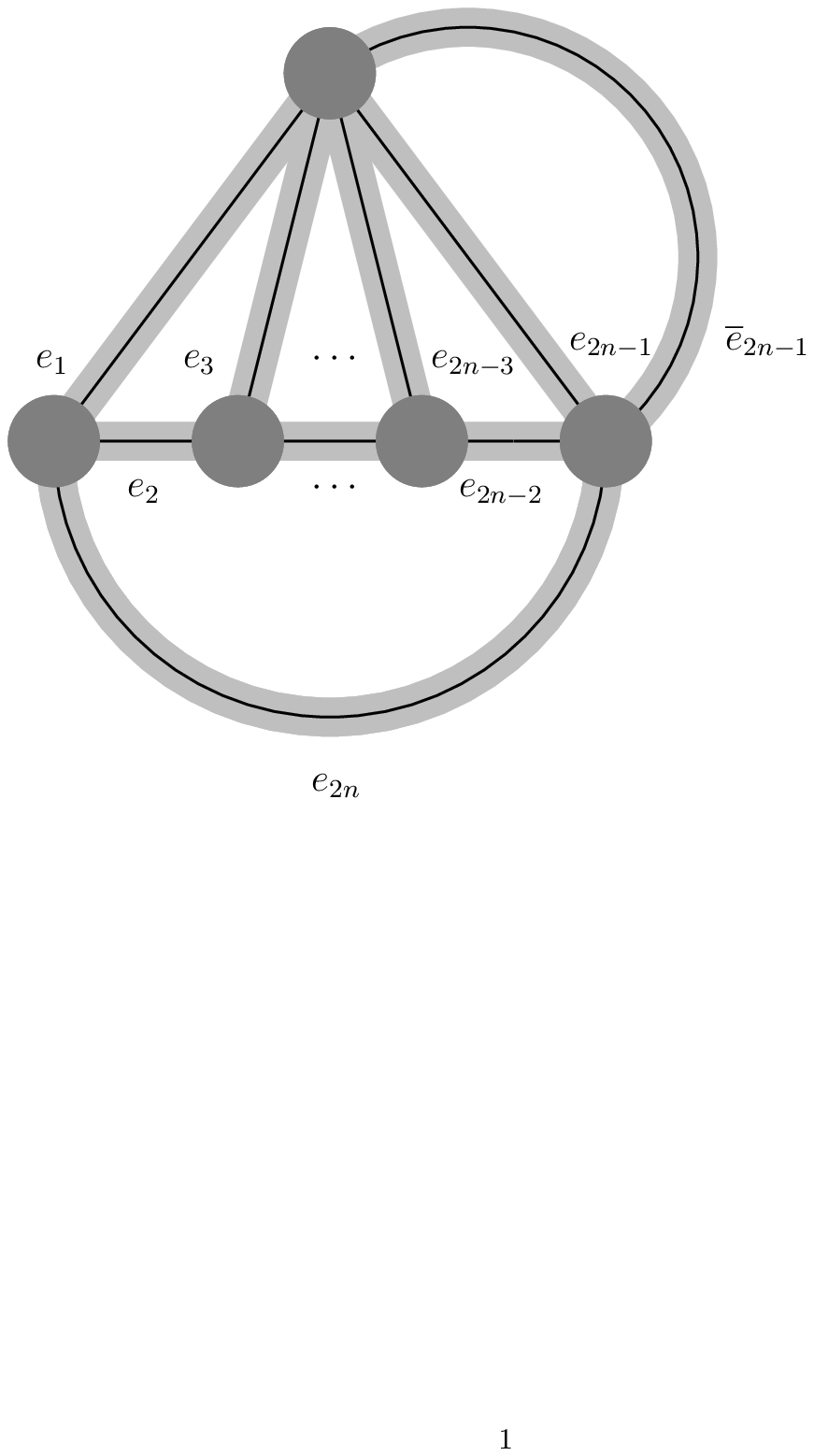}

  \end{minipage}
  \caption{$W_{n}$, $W_{n}-e_{2n-1}$ and $\overline{W}_{n}$ }
  \label{fig:w_{n}}
  \end{figure}
\begin{figure}[h]
  \begin{minipage}[t]{0.4\textwidth}
  \centering
  \includegraphics[width=0.7\textwidth]{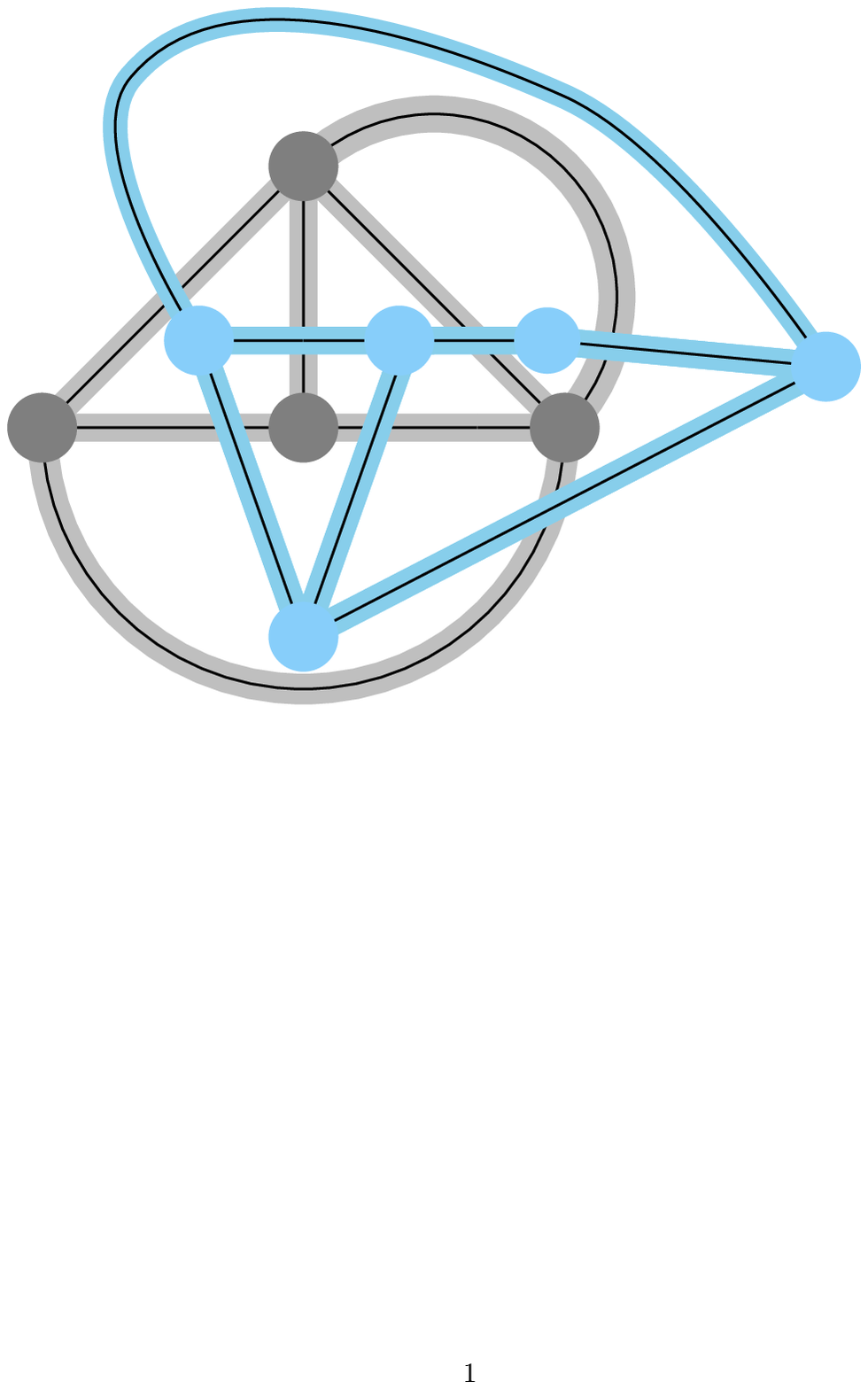}

  \end{minipage}
  \begin{minipage}[t]{0.35\textwidth}
  \centering
  \includegraphics[width=0.75\textwidth]{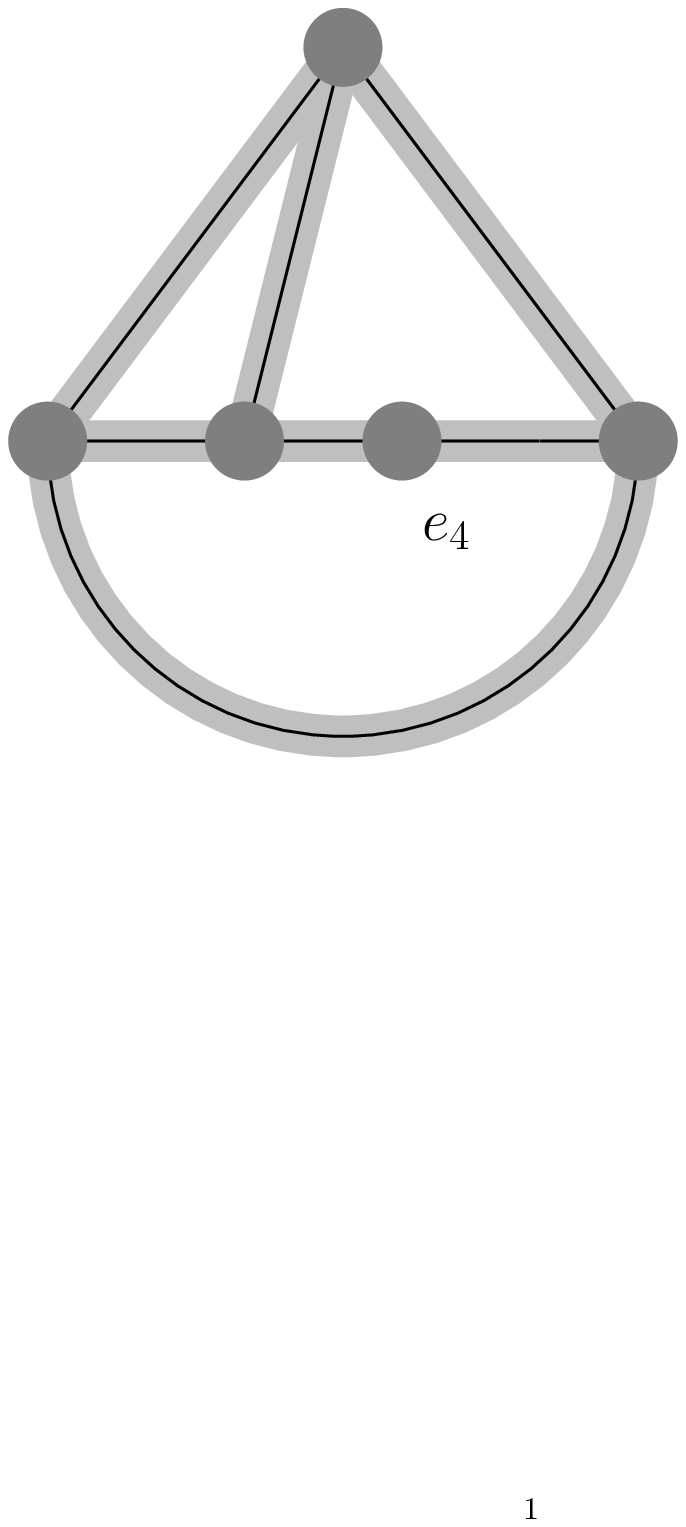}

  \end{minipage}
  \caption{$\overline{W}_{3}^{*}$}
   \label{fig:w_{5}}
  \end{figure}

 First, we give a formula between the pdG-polynomial  of $\overline{W}_{n}^{*}$ and  $W_{n}$. Since $\overline{W}_{n}^{*}$ is isomorphic to $W_{n}$ with ribbon $e_{2n-2}$ subdivided once, as shown in Figure \ref{fig:w_{5}}, and $W_{n}-e_{2n-2}$ is isomorphic to $Q_{n}$(see Example \ref{thm:2m2} for the definition), as shown in Figure \ref{fig:w_{4}}, we know
\begin{eqnarray}\label{E:18}
~^{\partial}{\Gamma}_{\overline{W}_{n}^{*}}(z)&=&~^{\partial}{\Gamma}_{W_{n}}(z)+2z~^{\partial}{\Gamma}_{W_{n}-e_{2n-2}}(z)
\quad\text{by  Theorem \ref{wn}}\notag\\
&=&~^{\partial}{\Gamma}_{W_{n}}(z)+2z~^{\partial}{\Gamma}_{Q_{n}}(z).
\end{eqnarray}

  \begin{figure}[h]
  \begin{minipage}[t]{0.7\textwidth}
  \centering
  \includegraphics[width=0.75\textwidth]{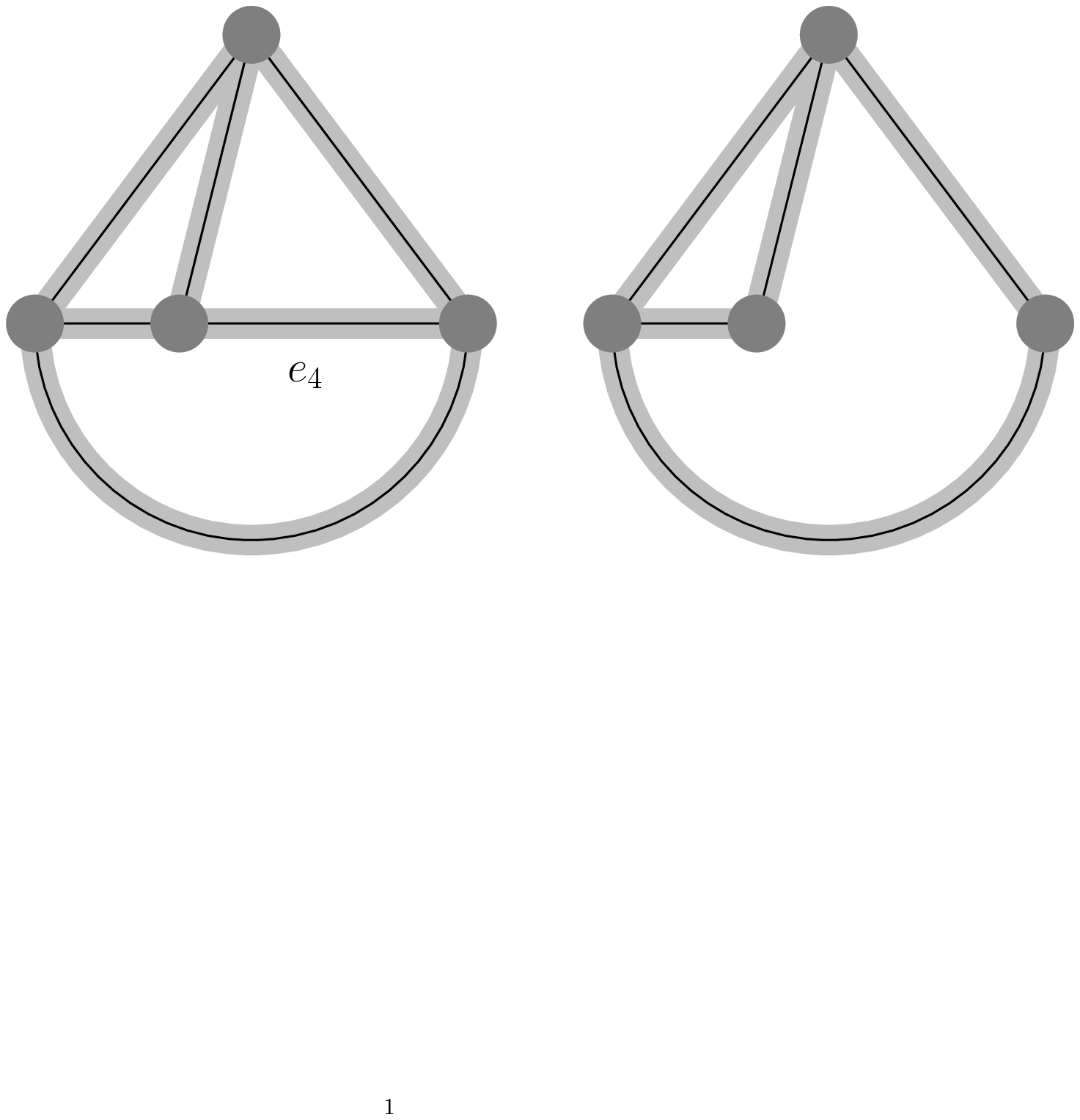}

  \end{minipage}

  \caption{$ W_{3}$ and $W_{3}-e_{4}$}
   \label{fig:w_{4}}
  \end{figure}
 Next we give a formula between the pdG-polynomial  of $\overline{W}_{n}$ and  $W_{n}$.
 Let  $\mathscr{A}=\{A| A^{c}\cup \overline{e}_{2n-1} \rm{\ contains\ a\ cycle\ with \ \overline{e}_{2n-1}\  and\ A \ contains\ a\ cycle} \rm{\ with}$ $\rm{\
 e_{2n-1}}  \rm{\ in}$ $ \rm{\ \overline{W}_{n},}\ A\subset E(W_{n})\},$ for $n\geq 3.$ The ribbon graph $W_{n}$ can be obtained from $\overline{W}_{n}$ by deleting the multiple ribbon $\overline{e}_{2n-1}$,   equation (\ref{eq:12})  gives
\begin{equation}\label{eq:16}
~^{\partial}{\Gamma}_{\overline{W}_{n}}(z)=(2z+1)~^{\partial}{\Gamma}_{W_{n}}(z)-2z^{2}
~^{\partial}{\Gamma}_{W_{n}-e_{2n-1}}(z)+2(1-z)^{2}\sum\limits_{A\in \mathscr{A}} z^{\gamma[W_{n}^{A}]}.
\end{equation}

Our problem reduces to calculate $\sum\limits_{A\in \mathscr{A}} z^{\gamma[W_{n}^{A}]}$ and $~^{\partial}{\Gamma}_{W_{n}-e_{2n-1}}(z)$. For any $A^{c}\in \mathscr{A}$, we partition the calculation of
 $\sum\limits_{A\in \mathscr{A}} z^{\gamma[W_{n}^{A}]}$ into two cases: $e_{2n}\in A^{c}$ or $e_{2n}\notin A^{c}$. First, suppose that $e_{2n}\in A^{c}$ and $e_{2n-1}\in A$.
Put $\mathscr{C}_{i}=\{A_{i}|e_{1}\notin A_{i} \rm{\ and}\ e_{2i-1}\in A_{i}, A_{i}\subseteq E(Q_{i})\}$,  and
 let $f_{i}(z)=\sum\limits_{A_{i}\in \mathscr{C}_{i}} z^{\gamma[Q_{i}^{A_{i}}]}$, where $i\geq 2$.
It is easy to see that $f_{2}(z)=2z$. The ribbon graph ${A_{n}}-e_{2n-1}$ can be viewed as joining a ribbon $e_{2n-2}$ to a subgraph of $Q_{n-1}$ without the edge $e_1$(the label of $Q_{n}$ is the same as the label of $W_{n}$ in Figure \ref{fig:w_{n}}). Corollary \ref{1/2} and Proposition 3.2 in  \cite{GMT18} imply that
 \begin{eqnarray}\label{5801}
\sum\limits_{A_{n}-e_{2n-1}\in \mathscr{C}_{n}} z^{\gamma[(Q_{n}-e_{n-1})^{A_{n}-e_{2n-1}}]}
 &=& ~^{\partial}{\Gamma}_{Q_{n-1}}(z).
 \end{eqnarray}

    Let  $\mathscr{A}_{n}=\{A'_{n}|  A'_{n} $\rm{\ contains\ a\ cycle\ with}$ \
 e_{2n-1}  \rm{\ in}$ $ \rm{\ Q_{n},} \ e_{1}\notin A'_{n}, \ A'_{n}\subset E(Q_{n}-e_{2n-1})\},$ by the definition of $A'_{n}$ and $A_{i}$, we know that for each $A'_{n}$, we can find a $A_{i}$ ($2\leq i \leq n-1$), such that
 $$A_{i}=A'_{n}-e_{2n-2}- e_{2n-4}-\cdots - e_{2i-2}.$$
 Note that deletion does not change the number of components of $A'_{n}$, then  $c(A'_{n})=c(A_{i})$.
Since $(A'_{n})^{c}$ has an extra isolated vertex-disk, we have  $c((A'_{n})^{c})=c(A_{i}^{c})+1$. Clearly,  $v(Q_{n})=v(Q_{i})+n-i$.  From Theorem~\ref{Gro:main}, Corollary \ref{1/2} and Theorem \ref{main}, we have
\begin{eqnarray}\label{eq:19}
f_{n}(z)&=&\sum\limits_{A_{n}\in \mathscr{C}_{n}} z^{\gamma[(Q_{n})^{A_{n}}]}\notag\\
&=&z^{\partial}{\Gamma}_{Q_{n-1}}(z)+(1-z)\sum\limits_{A'_{n}\in \mathscr{A}_{n}} z^{\gamma[(Q_{n})^{A'_{n}}]}\notag \quad\text{by  (\ref{5801})}\\
&=& z^{\partial}{\Gamma}_{Q_{n-1}}(z)+(1-z)\sum \limits_{i=2}^{n-1}z^{n-1-i}f_{i}(z).
\end{eqnarray}

   We are now in a position to calculate $\sum\limits_{A\in \mathscr{A}} z^{\gamma[W_{n}^{A}]}$. For any $A\in \mathscr{A}$, there exists $A'_{n-1}\in \mathscr{A}_{n-1}$
   such that  $A=A'_{n-1}\vee e_{2n-2}\vee e_{2n-1}$ and $A^{c}=(A'_{n-1})^{c}\vee e_{2n}.$ Note that
   $e_{2n-3}\in A'_{n-1}$, and  $e_{2n-3}$$e_{2n-2}$$e_{2n-1}$ is a 3-cycle in $A$, then
   $c(A'_{n-1})=c(A)$.
Since $e_{1} \in  (A'_{n-1})^{c}$,  ribbon $e_{1}$ and ribbon $e_{2n}$ are adjacent in $A^{c}$, we obtain     $c((A'_{n-1})^{c})=c(A^{c})$. Obviously,  $v(Q_{n-1})+1=v(W_{n})$.
The proof for $e_{2n}\in A$ and $e_{2n-1}\in A^{c}$ will not be reproduced here, since it is the same as that just given for $e_{2n-1}\in A$ and $e_{2n}\in A^{c}$. By  Theorem~\ref{Gro:main} we have
 \begin{eqnarray}\label{eq:20}
\sum\limits_{A\in \mathscr{A}} z^{\gamma[W_{n}^{A}]}&=&2\sum \limits_{i=2}^{n-1}z^{n-i}f_{i}(z).
\end{eqnarray}

It remains to calculate that $~^{\partial}{\Gamma}_{W_{n}-e_{2n-1}}(z)$. Recall that $W_{n}-e_{2n-1}$ is isomorphic to $W_{n-1}$ with ribbon $e_{2n-2}$ subdivided once,  and $W_{n-1}-e_{2n-2}$ is isomorphic to $Q_{n-1}$, as shown in Figure \ref{fig:w_{n}}.
 Theorem \ref{wn} gives
\begin{equation}\label{eq:17}
~^{\partial}{\Gamma}_{W_{n}-e_{2n-1}}(z)=~^{\partial}{\Gamma}_{W_{n-1}}(z)+2z~^{\partial}{\Gamma}_{Q_{n-1}}(z).
\end{equation}

 Using the fact that $~^{\partial}{\Gamma}_{\overline{W}_{n}^{*}}(z)=~^{\partial}{\Gamma}_{\overline{W}_{n}}(z),$   and by (\ref{E:18})- (\ref{eq:17}),  we have the following recursive relations:
\begin{eqnarray*}
~^{\partial}{\Gamma}_{W_{n}}(z)&=&z~^{\partial}{\Gamma}_{W_{n-1}}(z)+~^{\partial}{\Gamma}_{Q_{n}}(z)+2z~^{\partial}{\Gamma}_{Q_{n-1}}(z)-(2-2z)
f_{n}(z),\\
f_{n}(z)&=& z^{\partial}{\Gamma}_{Q_{n-1}}(z)+(1-z)\sum \limits_{i=2}^{n-1}z^{n-1-i}f_{i}(z),\\
~^{\partial}{\Gamma}_{Q_{n}}(z)&=&(4z+1)~^{\partial}{\Gamma}_{Q_{n-1}}(z)-4z^{2}~^{\partial}{\Gamma}_{Q_{n-2}}(z),
\quad \text{by Example \ref{thm:2m2} }
\end{eqnarray*}

\noindent with initial conditions $~^{\partial}{\Gamma}_{W_{1}}(z)=4,$ $~^{\partial}{\Gamma}_{W_{2}}(z)=4z^{2}+10z+2,$ $~^{\partial}{\Gamma}_{Q_{1}}(z)=2,$ $~^{\partial}{\Gamma}_{Q_{2}}(z)=6z+2,$
$f_{2}(z)=2z.$


\end{example}

\subsection{Ring-like planar ribbon graphs}For $n\geq 1$, let $C_n$ be a planar cycle ribbon graph,  and replace each vertex with a connected planar ribbon graph. The resulting planar ribbon graph is called a \textit{Ring-like planar ribbon graph} $R_n$. We have the following result.
\begin{thm} \label{HN}
Let  $G_1,G_2,\cdots,G_n$ be  disjoint  connected planar ribbon graphs. For $1\leq i\leq n,$ let $v_{2i}, v_{2i-1}$ be two root-vertices of $G_i,$  and let   $R_{n}= \mathop{\cup}\limits_{i=1}^{n}G_{i}\cup \mathop{\cup}\limits_{i=1}^{n-1} v_{2i}v_{2i+1}\cup v_{2n}v_{1}$ be the Ring-like planar ribbon graph.  If $\bar{G}_{i}=G_{i}+ v_{2i-1}v_{2i}$ is a planar ribbon graph for $1\leq i\leq n,$   then the pdG-polynomial of $R_n$ is
\begin{eqnarray}
~^{\partial}{\Gamma}_{R_{n}}(z)&=&2^{n}z\mathop{\prod}\limits _{i=1}^{n}~^{\partial}{\Gamma}_{G_{i}}(z)+(2-2z)\mathop{\prod}\limits_{i=1}^{n}\frac{~^{\partial}{\Gamma}_{\bar{G_{i}}}(z)
-2z~^{\partial}{\Gamma}_{G_{i}}(z)}{2-2z}.\label{HN00}
\end{eqnarray}
\end{thm}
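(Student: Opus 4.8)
The plan is to run the deletion recurrence of Theorem \ref{main} on the single ``closing'' bridge $f_{n}=v_{2n}v_{1}$, which is exactly the edge whose removal unrolls the ring into a chain. Since $R_{n}-f_{n}$ is the path-like concatenation $G_{1}\cup f_{1}\cup\cdots\cup f_{n-1}\cup G_{n}$ (writing $f_{i}=v_{2i}v_{2i+1}$ for the connecting ribbons), it is connected, so the hypothesis of Theorem \ref{main} is met and we get
\[
~^{\partial}{\Gamma}_{R_{n}}(z)=2z\,~^{\partial}{\Gamma}_{R_{n}-f_{n}}(z)+(2-2z)\sum_{A\in\mathscr{A}}z^{\gamma[(R_{n}-f_{n})^{A}]},
\]
where $\mathscr{A}$ collects the $A\subseteq E(R_{n}-f_{n})$ for which $A\cup f_{n}$ contains a cycle through $f_{n}$. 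The remaining work is to evaluate the two summands and match them to the two terms of (\ref{HN00}).

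For the first summand I would establish a bridge-factorization lemma: if a connected planar ribbon graph $H$ is formed by joining connected ribbon graphs $H_{1},H_{2}$ through a single bridge $b$, then $~^{\partial}{\Gamma}_{H}(z)=2\,~^{\partial}{\Gamma}_{H_{1}}(z)\,~^{\partial}{\Gamma}_{H_{2}}(z)$. The key point is that for every $A\subseteq E(H)$, writing $A=A_{1}\cup A_{2}\cup S$ with $A_{j}\subseteq E(H_{j})$ and $S\subseteq\{b\}$, formula (\ref{G:main}) yields $\gamma[H^{A}]=\gamma[H_{1}^{A_{1}}]+\gamma[H_{2}^{A_{2}}]$ \emph{independently} of whether $b\in A$: the bridge either merges two components of $A$ (dropping $c(A)$ by one) or two components of $A^{c}$ (dropping $c(A^{c})$ by one), and in both cases this cancels against $v(H)=v(H_{1})+v(H_{2})$ and $c(H)=1$. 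Summing over the two choices of $S$ produces the factor $2$. Iterating across the $n-1$ bridges of $R_{n}-f_{n}$ gives $~^{\partial}{\Gamma}_{R_{n}-f_{n}}(z)=2^{\,n-1}\prod_{i=1}^{n}~^{\partial}{\Gamma}_{G_{i}}(z)$, hence $2z\,~^{\partial}{\Gamma}_{R_{n}-f_{n}}(z)=2^{n}z\prod_{i=1}^{n}~^{\partial}{\Gamma}_{G_{i}}(z)$, the first term of (\ref{HN00}).

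For the second summand I would first read off $\mathscr{A}$ combinatorially: a cycle through $f_{n}=v_{2n}v_{1}$ in the chain must traverse every block, so $A\in\mathscr{A}$ precisely when $A$ contains all the bridges $f_{1},\dots,f_{n-1}$ and, for each $i$, the restriction $A_{i}=A\cap E(G_{i})$ connects the two roots $v_{2i-1},v_{2i}$ of $G_{i}$; denote this family of root-connecting subsets by $\mathscr{A}_{i}$. A component count in the spirit of the bridge lemma (all bridges lie in $A$, none in $A^{c}$) gives $\gamma[(R_{n}-f_{n})^{A}]=\sum_{i=1}^{n}\gamma[G_{i}^{A_{i}}]$ for $A\in\mathscr{A}$, so the sum factors as $\prod_{i=1}^{n}\bigl(\sum_{A_{i}\in\mathscr{A}_{i}}z^{\gamma[G_{i}^{A_{i}}]}\bigr)$. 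Finally, applying Theorem \ref{main} to $\bar{G}_{i}=G_{i}+v_{2i-1}v_{2i}$ with the edge $e=v_{2i-1}v_{2i}$ — whose associated family is exactly $\mathscr{A}_{i}$, since a cycle through $e$ in $\bar{G}_{i}$ is the same as a root-connecting subset of $G_{i}$ — yields $\sum_{A_{i}\in\mathscr{A}_{i}}z^{\gamma[G_{i}^{A_{i}}]}=\frac{~^{\partial}{\Gamma}_{\bar{G_{i}}}(z)-2z\,~^{\partial}{\Gamma}_{G_{i}}(z)}{2-2z}$. Substituting this into the product recovers the second term of (\ref{HN00}).

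I expect the genuine obstacle to be the component bookkeeping, that is, proving cleanly in both the bridge lemma and the $\mathscr{A}$-computation that the orientable genus is \emph{additive} over the blocks $G_{i}$ regardless of which bridges are selected. Once that additivity is in hand, the generating function factors mechanically and the two invocations of Theorem \ref{main} close the argument.
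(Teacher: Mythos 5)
Your proposal is correct and follows essentially the same route as the paper: delete the closing ribbon $e_{n}=v_{2n}v_{1}$, apply Theorem \ref{main}, factor $~^{\partial}{\Gamma}_{R_{n}-e_{n}}(z)$ over the blocks as $2^{n-1}\prod_{i=1}^{n}~^{\partial}{\Gamma}_{G_{i}}(z)$, factor the $\mathscr{A}$-sum into $\prod_{i=1}^{n}\sum_{\bar{A}_{i}\in\mathscr{A}_{i}}z^{\gamma[G_{i}^{\bar{A}_{i}}]}$ via additivity of the partial-dual genus across the connecting bridges, and apply Theorem \ref{main} once more to each $\bar{G}_{i}$ to evaluate that inner sum. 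The only cosmetic difference is that you prove the bridge-factorization step directly from formula (\ref{G:main}), where the paper instead cites Proposition 3.2 of \cite{GMT18}.
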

\begin{proof}

Let $e_{i}=v_{2i}v_{2i+1}$ ($1\leq i\leq n-1$), and $e_{n}=v_{2n}v_{1}.$ For $1\leq i\leq n$, let $P_{2i-1,2i}$ be a $v_{2i-1}v_{2i}$-path in $G_i.$ Then $P=\mathop{\cup}\limits_{i=1}^{n}P_{2i-1,2i}\cup \mathop{\cup}\limits_{i=1}^{n-1}e_i$ is a path graph and $P\cup e_n$ is a cycle. Furthermore, $P_{2i-1,2i}\cup v_{2i}v_{2i-1}$ is a cycle graph in $\bar{G_{i}}$.

To prove the Theorem, we give some  definitions.
Suppose $A_{i}$ is an edge subset of $E(G_{i})$ that does not intersect with $P_{2i,2i-1}$,
 next we let $\bar{A}_{i}=P_{2i,2i-1}\cup A_{i},$ $\overline{e}_{i}=v_{2i}v_{2i-1}$ and let $A=P\cup \mathop{\cup}\limits_{i=1}^{n}A_{i}=
\mathop{\cup}\limits \limits_{i=1}^{n}\bar{A}_{i}\cup\sum \limits_{i=1}^{n-1}e_{i}$. It can be shown that the complement of $A$ in $E(R_{n}-e_{n})$ is equal to the complement of $\mathop{\cup}\limits_{i=1}^{n}\bar{A}_{i}$ in  $E(\mathop{\cup} \limits_{i=1}^{n}G_{i})$, it follows that
\begin{eqnarray*}
c(A^{c})&=&c((\mathop{\cup}\limits_{i=1}^{n}\bar{A}_{i})^{c}).
\end{eqnarray*}
 Now define
 $$\mathscr{A}=\{A| A\cup e_{n} \rm{\ contains\ a\ cycle\ with}\ e_{n}\  \rm{\ in}\ R_{n}, A\subset E(R_{n}-e_{n})\},$$ and
  $$\mathscr{A}_{i}=\{\bar{A}_{i}| \bar{A}_{i}\cup \overline{e}_{i} \rm{\ contains\ a\ cycle\ with}\ \overline{e}_{i}\  \rm{\ in}\ \bar{G_{i}},\bar{A}_{i}\subset E(G_{i})\}.$$

Note that $c(R_{n}-e_{n})=c(\mathop{\cup} \limits_{i=1}^{n} G_{i})-n+1$,  $v(R_{n}-e_{n})=v(\mathop{\cup} \limits_{i=1}^{n} G_{i})$,    and $c(A)=c(\mathop{\cup} \limits_{i=1}^{n} \bar{A}_{i})-n+1$. Therefore, according to Theorem \ref{Gro:main}, we have
$$\gamma[(R_{n}-e_{n})^{A}]=\gamma[\mathop{\cup} \limits_{i=1}^{n}G_{i}^{\bar{A}_{i}}],$$
 thus
\begin{eqnarray}
\sum\limits_{A\in \mathscr{A}} z^{\gamma[(R_{n}-e_{n})^{A}]}&=&\prod\limits_{i=1}^{n}\sum\limits_{\bar{A}_{i}\in \mathscr{A}_{i}} z^{ \gamma[G_{i}^{\bar{A}_{i}}]}.\label{HN2}
\end{eqnarray}

 By Proposition 3.2 in \cite{GMT18} and   Theorem \ref{main},
 \begin{eqnarray}
 ~^{\partial}{\Gamma}_{R_{n}-e_{n}}(z)&=&2^{n-1}\mathop{\prod}\limits _{i=1}^{n}~^{\partial}{\Gamma}_{G_{i}}(z),\label{HN5}\\
~^{\partial}{\Gamma}_{\bar{G_{i}}}(z)&=&2z~^{\partial}{\Gamma}_{G_{i}}(z)+(2-2z)\sum\limits_{\bar{A}_{i}\in \mathscr{A}_{i}} z^{\gamma[G_{i}^{\bar{A}_{i}}]}, \label{HN3}\\
~^{\partial}{\Gamma}_{R_{n}}(z)&=&2z~^{\partial}{\Gamma}_{R_{n}-e_{n}}(z)+(2-2z)\sum\limits_{A\in \mathscr{A}} z^{\gamma[(R_{n}-e_{n})^{A}]}.\label{HN4}
\end{eqnarray}
 The result follows by substituting (\ref{HN2}), (\ref{HN5}) and  (\ref{HN3}) into  (\ref{HN4}).

\end{proof}
\begin{example}
A ribbon graph consisting of $n$ multiple ribbons with two vertices is called \textit{dipole ribbon graph} $D_{n}$. A \textit{necklace graph}  is obtained by adding $n$ disjoint edges to the cycle graph $C_{2n}$.  Let $N_n=\mathop{\cup}\limits_{i=1}^{n}G_{i}\cup \mathop{\cup}\limits_{i=1}^{n-1} v_{2i}v_{2i+1}\cup v_{2n}v_{1}$  be the planar ribbon necklace graph as shown  in Figure \ref{fig:G_{6}}. For $1\leq i\leq n,$ let $G_i$ be a copy of $D_2$ with vertices $v_{2i}, v_{2i-1}$. By Theorem \ref{HN}, we know $\bar{G_{i}}$ is $D_{3}$. Clearly, $~^{\partial}{\Gamma}_{D_{k}}(z)=2+(2^{k}-2)z.$
\begin{eqnarray}\label{necklace:formula}
~^{\partial}{\Gamma}_{N_{n}}(z)&=&2^{n}z\mathop{\prod}\limits _{i=1}^{n}~^{\partial}{\Gamma}_{D_{2}}(z)+(2-2z)(\frac{~^{\partial}{\Gamma}_{D_{3}}(z)-
2z~^{\partial}{\Gamma}_{D_{2}}(z)}{2-2z})^{n}\quad \text{by (\ref{HN00})}\notag\\
&=&2^{n}z(2+2z)^{n}+(2-2z)(1+2z)^{n}.
\end{eqnarray}

\begin{figure}[h]
  \centering
  \includegraphics[width=0.45\textwidth]{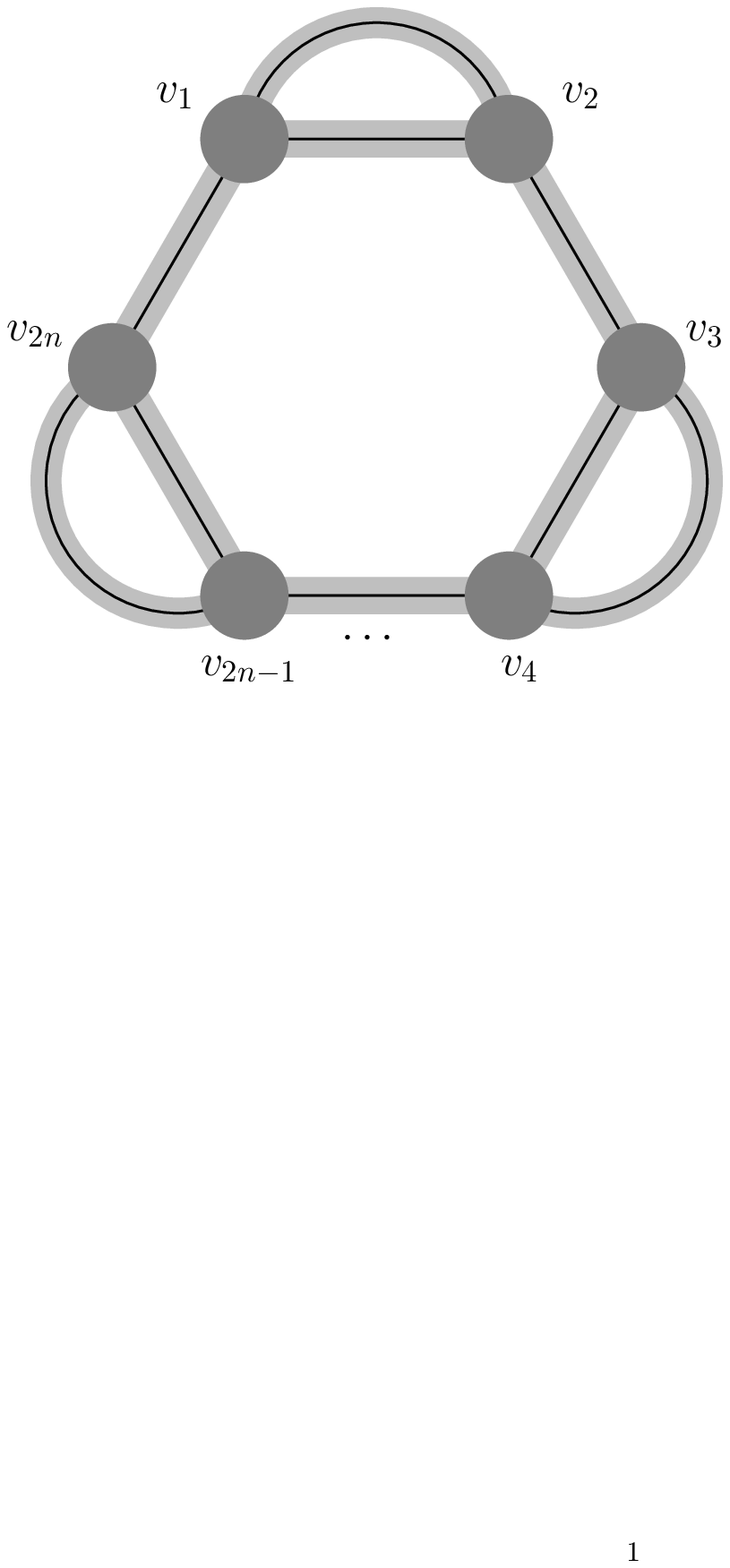}
 \caption{$N_{n}$}
 \label{fig:G_{6}}
\end{figure}
\end{example}


\section{Asymptotic partial-dual genus distribution}

In this section we present asymptotic results for partial-dual genus distributions of some infinite ribbon graph families. 

\begin{thm}The partial-dual genus distribution for the planar standard fan graph $F_{(2,n,2)}$ is asymptotic to normal distribution with mean $\frac{2(n+3)}{3}$ and variance $4n+12$ when $n$ tends to infinity.
\end{thm}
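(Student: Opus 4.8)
The plan is to read off both the limiting shape and the constants directly from the three-term recurrence for ${}^{\partial}\Gamma_{Q_N}(z)$ derived in Example~\ref{thm:2m2}, using the identification ${}^{\partial}\Gamma_{F_{(2,n,2)}}(z)={}^{\partial}\Gamma_{Q_{n+3}}(z)$. Abbreviate $P_N(z):={}^{\partial}\Gamma_{Q_N}(z)$, so that $P_{N+1}(z)=(4z+1)P_N(z)-4z^2P_{N-1}(z)$ with $P_1=2$, $P_2=6z+2$. Treating $z$ as a parameter, this is a constant-coefficient linear recurrence in $N$ whose characteristic equation $t^2-(4z+1)t+4z^2=0$ has the two analytic roots
\begin{equation*}
t_\pm(z)=\frac{(4z+1)\pm\sqrt{8z+1}}{2},
\end{equation*}
so that $P_N(z)=A(z)\,t_+(z)^N+B(z)\,t_-(z)^N$ for functions $A,B$ fixed by the initial data. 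At $z=1$ one has $t_+(1)=4$, $t_-(1)=1$, and solving the two initial conditions gives $A(1)=\tfrac12$, $B(1)=0$. Because the dominance $t_+(1)=4>1=t_-(1)$ is strict and $A(1)\neq0$, both persist on a complex neighborhood of $z=1$, and there
\begin{equation*}
P_N(z)=A(z)\,t_+(z)^N\bigl(1+O(4^{-N})\bigr)
\end{equation*}
uniformly.

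From this dominant-root representation I would form the moment generating function $M_N(s)=\mathbb{E}\bigl(e^{sX_{Q_N}}\bigr)=P_N(e^s)/P_N(1)$ and rewrite it in quasi-power form,
\begin{equation*}
\log M_N(s)=N\,U(s)+V(s)+O(4^{-N}),\qquad U(s)=\log t_+(e^s)-\log t_+(1),
\end{equation*}
holding uniformly for $s$ in a neighborhood of $0$, with $V(s)=\log A(e^s)-\log A(1)$ analytic since $A(1)\neq0$. Hwang's quasi-power theorem now applies with $\phi(N)=N$: it gives asymptotic normality of $X_{Q_N}$ provided $U''(0)>0$, together with $\mu_N=N\,U'(0)+O(1)$ and $\sigma_N^2=N\,U''(0)+O(1)$ for the mean and variance. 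Differentiating $t_+$ gives $t_+'(1)=\tfrac83$, hence $U'(0)=t_+'(1)/t_+(1)=\tfrac23$ and the leading mean $\tfrac23(n+3)=\tfrac{2(n+3)}{3}$, matching the statement; the variance is produced by the identical computation of $U''(0)$, the essential point being that $U''(0)>0$ so that $\sigma_N^2\to\infty$ and the normalized variable converges to $N(0,1)$.

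The step I expect to be the real work is the uniform dominant-root estimate that feeds Hwang's theorem: one must check that $|t_+(z)|>|t_-(z)|$ and $A(z)\neq0$ hold not merely at $z=1$ but throughout a fixed complex disc about $1$, and that the resulting error $O(4^{-N})$ is uniform there rather than pointwise. Both dominance facts are strict at $z=1$, so continuity supplies a neighborhood, but the uniformity must be written out with care. As a self-contained alternative I would attempt to prove by induction that every $P_N$ is real-rooted with nonnegative coefficients and then invoke the classical central limit theorem for sequences of real-rooted probability generating functions, which needs only $\sigma_N^2\to\infty$; the obstacle there is the negative term $-4z^2P_{N-1}$, which blocks the naive interlacing argument, although the facts that $4z^2\ge0$ and that all roots are negative should allow a Sturm-type interlacing scheme to go through. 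Either way the theorem reduces to the strict positivity of $U''(0)$, i.e.\ to the linear growth of the variance.
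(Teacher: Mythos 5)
Your route is essentially the paper's: both arguments start from the three-term recurrence ${}^{\partial}\Gamma_{Q_{N}}(z)=(4z+1)\,{}^{\partial}\Gamma_{Q_{N-1}}(z)-4z^{2}\,{}^{\partial}\Gamma_{Q_{N-2}}(z)$, pass to the characteristic roots $\lambda_{1,2}(z)=\frac{4z+1\pm\sqrt{8z+1}}{2}$, and conclude by a quasi-powers central limit theorem driven by the dominant root; the paper invokes Theorem 2.1 of \cite{ZPC19} as a black box where you invoke Hwang's quasi-power theorem and sketch its hypotheses. The uniformity issue you single out as ``the real work'' is genuine but routine here: for $z$ in a small complex disc about $1$ the quantity $8z+1$ stays near $9$, so $\sqrt{8z+1}$, $\lambda_{1,2}$, $A$, $B$ are analytic there, $|\lambda_{1}/\lambda_{2}|$ stays near $4$, and $A(1)=\tfrac12\neq0$ persists, giving a uniform $O(\rho^{N})$ error for some $\rho<1$.

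The genuine gap is the step you defer as an ``identical computation'': you never evaluate $U''(0)$, and that evaluation is precisely what the stated variance constant depends on. Carrying it out with $\lambda_{1}(1)=4$, $\lambda_{1}'(1)=\tfrac{8}{3}$, $\lambda_{1}''(1)=-\tfrac{8}{27}$ gives
\begin{equation*}
U''(0)=\frac{\lambda_{1}(1)\bigl(\lambda_{1}'(1)+\lambda_{1}''(1)\bigr)-\lambda_{1}'(1)^{2}}{\lambda_{1}(1)^{2}}
=\frac{4\bigl(\tfrac{8}{3}-\tfrac{8}{27}\bigr)-\tfrac{64}{9}}{16}=\frac{64/27}{16}=\frac{4}{27},
\end{equation*}
so your method delivers variance $\sim\frac{4(n+3)}{27}$, not $4n+12$. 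A direct check confirms this order of magnitude: from ${}^{\partial}\Gamma_{Q_{3}}(z)=16z^{2}+14z+2$ and ${}^{\partial}\Gamma_{Q_{4}}(z)=40z^{3}+64z^{2}+22z+2$ one gets $\mathrm{Var}(X_{Q_{3}})=\tfrac{95}{256}\approx0.37$ and $\mathrm{Var}(X_{Q_{4}})=\tfrac{2191}{4096}\approx0.53$, an increment of about $4/27$ per step and nowhere near $4$. (The paper's own displayed formula for $v$ also evaluates to $\tfrac{4}{27}$ with these data, not to $4$.) So you cannot leave this computation implicit: asymptotic normality and the mean $\tfrac{2(n+3)}{3}$ go through exactly as you describe, but the variance constant in the statement is off by a factor of $27$ from what your (and the paper's) method actually produces, and your write-up as it stands does not certify the claimed value.
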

\begin{proof}By (\ref{equ:add:Qn}),  $~^{\partial}{\Gamma}_{F_{(2,n,2)}}(z)=~^{\partial}{\Gamma}_{Q_{n+3}}(z).$ Thus, it suffices to prove that the partial-dual genus distribution of $Q_n$ is asymptotically normal distribution when $n$ tends to infinity. Since \begin{equation*}
 ~^{\partial}{\Gamma}_{Q_{n}}(z)=(4z+1)~^{\partial}{\Gamma}_{Q_{n-1}}(z)-4z^{2}~^{\partial}{\Gamma}_{Q_{n-2}}(z),
\end{equation*}
we define its  characteristic equation as $F(z,\lambda)=\lambda^2-(4z+1)\lambda+4z^2.$ Then the two solutions of $F(z,\lambda)$ are $\lambda_1(z)=\frac{4z+1+\sqrt{8z+1}}{2}$ and $\lambda_2(z)=\frac{4z+1-\sqrt{8z+1}}{2}.$ One easily see that $|\lambda_1(z)|>|\lambda_2(z)|,$ for $z\in R$ and we have  \begin{eqnarray*}
\label{ppp-1}
  e=\frac{\lambda_1'(1)}{\lambda_1(1)}=\frac{2}{3},\quad  v=\frac{-\big(\lambda_1'(1)\big)^2 +\lambda_1(1)\cdot  \lambda_1^{''}(1)
 +\lambda_1(1)\cdot\lambda_1'(1) }{\lambda_1(1)^2}=4>0.
\end{eqnarray*}
From Theorem 2.1 in \cite{ZPC19}, the partial-dual genus distribution of $Q_n$ is asymptotically normal distribution with mean $\frac{2n}{3}$ and variance $4n$ when $n$ tends to infinity. The proof is completed.
\end{proof}

\begin{thm}
Consider  the planar ribbon necklace graph sequence   $\{N_{n}\}_{n=1}^\infty$. Then, the partial-dual genus distribution   of  $N_n$  is  asymptotically normal distribution with mean  $\frac{n+2}{2}$ and variance $\frac{n}{4}$ when $n$ tends to infinity.
\end{thm}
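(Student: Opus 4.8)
The plan is to reduce the necklace graph to an explicit generating polynomial and then apply the same asymptotic-normality machinery used for $F_{(2,n,2)}$. From equation (\ref{necklace:formula}) I already have the closed form
\[
~^{\partial}{\Gamma}_{N_{n}}(z)=2^{n}z(2+2z)^{n}+(2-2z)(1+2z)^{n}.
\]
The probability partial-dual genus polynomial is $P_{X_{N_n}}(z)={}^{\partial}{\Gamma}_{N_{n}}(z)/2^{e(N_n)}$, and since $N_n$ has $e(N_n)=3n$ ribbons (each of the $n$ copies of $D_2$ contributes $2$ and the $n$ connecting edges contribute $n$), the normalization is $2^{3n}$. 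First I would rewrite the dominant term: $2^{n}z(2+2z)^{n}=2^{2n}z(1+z)^{n}$, so after dividing by $2^{3n}$ the first summand becomes $2^{-n}z(1+z)^{n}$. The second summand $(2-2z)(1+2z)^{n}/2^{3n}$ should then be shown to be asymptotically negligible relative to the first, so that the limiting distribution is governed by $2^{-n}z(1+z)^{n}$ alone.

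The key observation is that $2^{-n}z(1+z)^{n}$ evaluated at $z=1$ gives $2^{-n}\cdot 2^{n}=1$, confirming it is (asymptotically) a probability generating function, while $(2-2z)(1+2z)^{n}$ evaluated at $z=1$ vanishes because of the factor $(2-2z)$; this is precisely why the second term does not contribute in the limit. So the heart of the argument is that $X_{N_n}$ is, up to a vanishing correction, distributed as $1$ plus a $\mathrm{Binomial}(n,\tfrac12)$ random variable, since $z(1+z)^n/2^n$ is the generating function of $1+\mathrm{Binomial}(n,\tfrac12)$. A $\mathrm{Binomial}(n,\tfrac12)$ has mean $\tfrac{n}{2}$ and variance $\tfrac{n}{4}$, so the shifted variable has mean $\tfrac{n}{2}+1=\tfrac{n+2}{2}$ and variance $\tfrac{n}{4}$, matching the claimed parameters exactly. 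I would make this rigorous either by invoking the De Moivre--Laplace central limit theorem for the binomial directly, or by applying Theorem 2.1 in \cite{ZPC19} to the recurrence satisfied by the dominant term.

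To set up the \cite{ZPC19} route cleanly, I would first derive a two-term linear recurrence whose characteristic roots isolate the dominant behavior. Writing $a_n(z)=2^{n}z(2+2z)^{n}$ and $b_n(z)=(2-2z)(1+2z)^{n}$, each is a geometric sequence in $n$: $a_n$ has ratio $4z(1+z)$ and $b_n$ has ratio $(1+2z)$. Hence ${}^{\partial}{\Gamma}_{N_{n}}(z)$ satisfies the recurrence with characteristic equation
\[
\bigl(\lambda-4z(1+z)\bigr)\bigl(\lambda-(1+2z)\bigr)=0,
\]
i.e. $\lambda^2-(4z^2+4z+1+2z)\lambda+4z(1+z)(1+2z)=0$. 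The dominant root for $z$ near $1$ is $\lambda_1(z)=4z(1+z)$, and I would compute $e=\lambda_1'(1)/\lambda_1(1)$ and the corresponding variance expression exactly as in the fan-graph proof. The main obstacle, and the step requiring care, is verifying the hypotheses of Theorem 2.1 in \cite{ZPC19}: I must check that $|\lambda_1(z)|>|\lambda_2(z)|$ holds in a neighborhood of $z=1$ (which is where $\lambda_1=4z(1+z)$ clearly dominates $\lambda_2=1+2z$ since $16>3$ at $z=1$), and that $\lambda_1(z)$ is suitably analytic and nonvanishing there. Once the variance condition $v>0$ is confirmed by the explicit computation, the theorem delivers asymptotic normality with the stated mean $\tfrac{n+2}{2}$ and variance $\tfrac{n}{4}$, and the proof is complete.
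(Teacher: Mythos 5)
Your main argument is correct but takes a genuinely different route from the paper. The paper proves the theorem ``by hand'': it computes $P_{X_{n}}'(1)$ and $P_{X_{n}}''(1)$ exactly, obtaining mean $\frac{n+2}{2}-2(\frac38)^n$ and variance $\frac{n}{4}+o(1)$, and then forms the moment generating function $\phi_n(t)$ of the standardized variable and shows $\phi_n(t)\to e^{t^2/2}$ by Taylor-expanding $\ln z$ and $\ln\frac{1+z}{2}$ at $z=e^{a_nt}$. Your observation that $2^{-3n}\,{}^{\partial}{\Gamma}_{N_{n}}(z)=2^{-n}z(1+z)^n+2^{-3n}(2-2z)(1+2z)^n$ is the probability generating function of $1+\mathrm{Binomial}(n,\tfrac12)$ plus a correction is more conceptual and, once one detail is added, gives a cleaner proof: the correction $(2-2z)(1+2z)^n/2^{3n}$ has coefficients of both signs, so ``negligible'' must mean an $\ell^1$ bound, namely that the absolute values of its coefficients sum to at most $2\cdot 2\cdot 3^n/2^{3n}=4(\tfrac38)^n$; this yields $\sup_k\bigl|\mP(X_{N_n}\le k)-\mP(1+B_n\le k)\bigr|\to 0$ uniformly, and De Moivre--Laplace applied to $B_n=\mathrm{Binomial}(n,\tfrac12)$ finishes the proof with exactly the stated mean $\frac{n+2}{2}$ and variance $\frac{n}{4}$. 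You should state that uniform bound explicitly; vanishing of the correction at $z=1$ alone does not control the distribution function.

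Your fallback route through Theorem 2.1 of \cite{ZPC19} contains an arithmetic slip that would derail it: for $a_n(z)=2^nz(2+2z)^n$ the ratio is $a_n/a_{n-1}=2(2+2z)=4(1+z)$, not $4z(1+z)$ --- the prefactor $z$ is common to $a_n$ and $a_{n-1}$ and cancels. With your root $\lambda_1(z)=4z(1+z)$ one gets $e=\lambda_1'(1)/\lambda_1(1)=12/8=\tfrac32$ and hence mean $\sim\tfrac{3n}{2}$, contradicting the statement. With the correct dominant root $\lambda_1(z)=4(1+z)$ one gets $e=\tfrac12$ and $v=\tfrac14$, consistent with the claimed parameters, and the dominance $|4(1+z)|>|1+2z|$ near $z=1$ holds as you intended.
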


\begin{proof}We first compute the mean and variance for the partial-dual genus distribution  of $N_n.$ For any $n\in {N},$ let $X_{N_n}$ ($X_{n}$, for short) be a random variable with distribution
\begin{eqnarray}
\label{b-1}
  p_i=\mP(X_{n}=i)&=&\frac{\gamma_i(N_{n})}{2^{e(N_n)}},\quad i=0,1,2,\cdots,.
\end{eqnarray}  By (\ref{necklace:formula}), $~^{\partial}{\Gamma}_{N_{n}}(z)=2^{n}z(2+2z)^{n}+(2-2z)(2z+1)^{n},$ we get

$$
\begin{aligned}
P_{X_{n}}'(z)&=\frac{2^{2n}((n+1)z+1)(1+z)^{n-1}+2(-(2n+2)z+2n-1)(2z+1)^{n-1}}{2^{3n}}, {\; and}
\\
P_{X_{n}}''(z)&=\frac{2^{2n}(1+z)^{n-2}((n+n^2)z+2n)+2((-4n^2-4n)z+4n^2-8n)(2z+1)^{n-2}}{2^{3n}}.
\end{aligned}
$$

Thus

$$
\begin{aligned}
\mathbb{E}(X_n)&=P_{X_{n}}'(1)=\frac{(n+2)2^{3n-1}-2\times3^{n}}{2^{3n}}=\frac{n+2}{2}-2(\frac{3}{8})^n,\\
\mathbb{E}(X_n^2-X_n)&=P_{X_{n}}''(1)=\frac{(n^2+3n)2^{3n-2}-24n\times3^{n-2}}{2^{3n}}=\frac{n^2+3n}{4}-\frac{8n}{3}(\frac{3}{8})^n.
\end{aligned}
$$

After straightforward  calculations,  one has the following equation

\begin{eqnarray*}
   Var(X_{n})&=& P_{X_{n}}''(1)+P_{X_{n}}'(1)- (P_{X_{n}}'(1))^2=\frac{n}{4}+(-\frac{2n}{3}+2)(\frac{3}{8})^n-4(\frac{3}{8})^{2n}.
\end{eqnarray*}

Suppose $\phi_{n}(t)$ is the moment generating function of $  \frac{X_{n}- \frac{n+2}{2} }{\sqrt{\frac{n}{4}}}.$ I.e.,

\begin{eqnarray}
 \phi_{n}(t)=\mathbb{E} \exp{ {t\frac{X_{n}- \frac{n+2}{2}}{\sqrt{\frac{n}{4}}}}}=\mathbb{E} \exp\{a_{n} t X_{n}-b_{n}t\},
 \end{eqnarray}
where
\begin{eqnarray}
  \label{5-1} a_{n}=\frac{1}{\sqrt{\frac{n}{4}}},\quad b_{n}=\frac{\frac{n+2}{2}}{\sqrt{\frac{n}{4}}}.
\end{eqnarray}
Let $z=z(n,t)=e^{a_nt},$ then we have

\begin{eqnarray*}
 \phi_{n}(t)&=&\mathbb{E} \exp\{a_{n} t X_{n}-b_{n}t\}\\&=&e^{-b_{n}t} P_{X_{n}}(z)\\
 &=&e^{-b_{n}t}\frac{2^{n}z(2+2z)^{n}+(2-2z)(2z+1)^{n}}{2^{3n}}\\
 &=&e^{-b_{n}t}[\frac{1}{2^n}z(1+z)^n+2^{1-3n}(1-z)(2z+1)^n].
 \end{eqnarray*}
Noting that $z=1$ when $n$ tends to infinity. Thus
\begin{eqnarray*}
   \lim_{\substack{n\rightarrow \infty  }} \phi_{n}(t)&=&\lim_{\substack{n\rightarrow \infty }} e^{-b_{n}t}\left[\frac{1}{2^n}z(1+z)^n+2^{1-3n}(1-z)(2z+1)^n\right]\\
   &=&\lim_{\substack{n\rightarrow \infty }} e^{-b_{n}t}\left[\frac{1}{2^n}z(1+z)^n\right]\\
   &=&\lim_{\substack{n\rightarrow \infty }} e^{-b_{n}t}e^{\ln (z(\frac{1+z}{2})^n)}\\
   &=&\lim_{\substack{n\rightarrow \infty }}\exp{(-b_{n}t+\ln z+n\ln\frac{1+z}{2})}.
   \end{eqnarray*}

From Taylor formula,

\begin{eqnarray*}
 \ln\frac{1+z}{2}&=&  \frac{1}{2}(z-1)-\frac{1}{8} (z-1)^2+O\left((z-1)^3\right),
\\  \ln z&=&(z-1)-\frac{1}{2}(z-1)^2+O\left((z-1)^3\right),{\; and}\\
\\ z&=&e^{a_{n}  t}=1+a_{n}t+\frac{1}{2}a_{n}^2t^2+O\left(a_{n}^3t^3\right)
\end{eqnarray*}
Thus,

\begin{eqnarray*}
 \ln\frac{1+z}{2}&=&  \frac{1}{2}(a_{n}t+\frac{1}{2}a_{n}^2t^2)-\frac{1}{8} (a_{n}t+\frac{1}{2}a_{n}^2t^2)^2+O\left((a_{n}t+\frac{1}{2}a_{n}^2t^2)^3\right)\\
 &=&\frac{1}{2}a_{n}t+\frac{1}{8}a_{n}^2t^2 +O(a_{n}^3t^3).
\\ \ln z&=&a_{n}t+O(a_{n}^3t^3).
\end{eqnarray*}

Recall that $a_n=\frac{2}{\sqrt{n}}$ and $b_n=\frac{n+2}{\sqrt{n}},$ we have
\begin{eqnarray*}
   \lim_{\substack{n\rightarrow \infty  }} \phi_{n}(t)&=&\lim_{\substack{n\rightarrow \infty }}\exp{(-b_{n}t+\ln z+n\ln\frac{1+z}{2})}\\
   &=&\lim_{\substack{n\rightarrow \infty  }}\exp {(-\frac{n+2}{\sqrt{n}}t+\frac{2}{\sqrt{n}}t+n(\frac{1}{\sqrt{n}}t+\frac{1}{2n}t^2))}\\
   &=&\lim_{\substack{n\rightarrow \infty  }}\exp {\frac{1}{2}t^2}\\
   &=&e^{\frac{t^2}{2}},
   \end{eqnarray*}

which finishes the proof of this theorem.
\end{proof}

Note that the necklace $N_n$ above is a cubic outerplanar graph. Let $O_n$ be any cubic outerplanar ribbon graph, in \cite{Gro11}, Gross proved that any cubic outerplanar graph can be decomposed into star-ladder graph families. By Theorem \ref{HN}, we can calculate its pdG-polynomial recursively. A natural question is:
\begin{problem}
Are the partial-dual genus distributions of $O_n$  asymptotically normal  when $n$ tends to infinity?
\end{problem}

We also think it's a universal phenomenon that a Gaussian limit law should hold.

\subsection{Conclusions}
Recall that some relations between planar graphs and partial dual genus are also given in  \cite{Mof12}. Suppose $G$ is a ribbon graph and $A\subset E(G).$ It is  clear that  $~^{\partial}{\Gamma}_{G}(z)=~^{\partial}{\Gamma}_{G^{A}}(z)$ \cite{YJ21}. For a non-planar ribbon graph, if  $G^{A}$ is a planar ribbon graph for some $A\subset E(G),$ then we also can apply Theorem \ref{main} to enumerate its pdG-polynomial. In \cite{Mof12}, Moffatt  gives a characterization of a non-planar ribbon graph $G$ such that $G^{A}$ is planar. This is one of motivations for us to look at the partial-duals of planar ribbon graphs. For the non-planar ribbon graphs, we will discuss this in another paper \cite{CC21}.

\subsection{Acknowledgments}

This work is supported by the NNSFC (Grant No. 11471106) and the JSSCRC( Grant No.
2021530).
We are grateful to the anonymous referee for the valuable comment.

\end{document}